\newcommand{\<}{\langle}
\renewcommand{\>}{\rangle}
\newtheorem{thm}{Theorem}[section]
\newtheorem*{thm*}{Theorem}
\newtheorem{lemma}[thm]{Lemma}
\newtheorem{conj}[thm]{Conjecture}
\newtheorem{cor}[thm]{Corollary}
\newtheorem*{lemma*}{Lemma}
\newtheorem*{claim*}{Claim}
\newtheorem*{claim1}{Claim 1}
\newtheorem*{claim2}{Claim 2}
\newtheorem*{claim3}{Claim 3}
\newtheorem{prop}[thm]{Proposition}
\newtheorem{question}[thm]{Question}
\theoremstyle{definition}
\newtheorem{defn}[thm]{Definition}
\DeclareMathOperator{\Prob}{Prob}
\DeclareMathOperator{\dom}{dom}
\DeclareMathOperator{\ran}{ran}
\DeclareMathOperator{\diam}{diam}
\DeclareMathOperator{\cl}{cl}
\newcommand{\from}{\colon}
\newcommand{\bfSigma}{\boldsymbol{\Sigma}}
\newcommand{\cantor}{2^{\omega}}
\newcommand{\E}{\mathrel{E}}
\newcommand{\R}{\mathbb{R}}
\newcommand{\F}{\mathbb{F}}
\newcommand{\DC}{\mathsf{DC}}
\newcommand{\AD}{\mathsf{AD}}
\newcommand{\concat}{{}^\smallfrown}
\renewcommand{\subset}{\subseteq}
\renewcommand{\supset}{\supseteq}
\newcommand{\union}{\cup}
\newcommand{\bigunion}{\bigcup}
\newcommand{\inters}{\cap}
\newcommand{\biginters}{\bigcap}
\newcommand{\define}[1]{\emph{#1}} 
\newcommand{\actson}{\curvearrowright}
\newcommand{\cone}{\bigtriangledown}
\begin{document}

\title{On a question of Slaman and Steel}

\thanks{This research was partially carried out while the second author was
visiting Wellington on sabbatical. The second author would like to thank
the first author and Victoria University of Wellington for their hospitality.
The authors would also like to thank Kirill Gura and Antonio Montalb\'an
for helpful comments on preprints of the article. The second author is
partially supported by NSF grant DMS-2348788.}

\author{Adam R. Day}
\address{University of Auckland}
\email{adam.day@auckland.ac.nz}

\author{Andrew S. Marks}
\address{University of California, Berkeley}
\email{marks@math.ucla.edu}

\date{\today}

\begin{abstract}
We consider an old question of Slaman and Steel: whether Turing equivalence
is an increasing union of Borel equivalence relations none of which contain
a uniformly computable infinite sequence. We show this question is deeply
connected to problems surrounding Martin's conjecture and in the theory of
countable Borel equivalence relations. In particular, if Slaman and Steel's
question has a positive answer, it implies there is a universal countable
Borel equivalence relation which is not uniformly universal, that there is a
$(\equiv_T,\equiv_m)$-invariant function which is 
not uniformly invariant on any
pointed perfect set, and which is 
everywhere $\leq_m$-incomparable with both the Turing jump $j$ and its
complement.
\end{abstract}

\maketitle

\section{Introduction}

This paper is a contribution to the study of problems surrounding Martin's conjecture on Turing invariant functions and 
countable Borel equivalence relations. Our central
focus is an old open question of Slaman and Steel which they posed
\cite{SS} in
reaction to their proof in the same paper that 
Turing equivalence is not hyperfinite.
The question they asked is 
 whether Turing
equivalence can be expressed as a union of Borel equivalence
relations $E_n$ where $E_n \subset E_{n+1}$ for all $n$ and so that no $E_n$-class $[x]_{E_n}$ contains an infinite
sequence of reals uniformly computable from $x$. While this seems to be a very
specific question about computability, we show
(Theorem~\ref{thm:hrf_equivalence}) that it is
equivalent to a much more general question of whether every countable Borel
equivalence relation is what we call hyper-Borel-finite
(see Definition~\ref{defn:hef}). 

This question of Slaman and Steel has been completely unstudied since the
1988 paper where it was posed, and it remains open. However, we show that it is deeply connected to problems in both
Borel equivalence relations, and problems surrounding Martin's conjecture. 
In particular,
we show (Corollary~\ref{m_cor}.(1) and (3)) that
if Slaman and Steel's question has a positive answer,
then there is a Borel function $f$  on $\cantor$ that is Turing to many-one invariant  
that is everywhere $\leq_m$-incomparable with both the Turing jump $j$ and $\overline{j}$, and so $f$ is not uniformly invariant on any pointed
perfect set. So under this assumption, the analogues of Steel's conjecture for invariant functions from Turing equivalence to many-one equivalence are false. Such analogues are naturally suggested by 
Kihara and Montalb\'an's work \cite{KM}.
We also show
(Corollary~\ref{m_cor}.(2)) that
if Slaman and Steel's question has a positive answer, then 
many-one
equivalence on $2^\omega$ is a universal countable Borel equivalence
relation. Since many-one equivalence on $2^\omega$ is not 
uniformly universal \cite[Theorem 1.5.(5)]{M}, this implies that if Question~\ref{hyp-rec-fin} has a
positive answer, the conjecture of the second author that every
universal countable Borel equivalence relation is uniformly universal
(\cite[Conjecture 1.1]{M}) is false.

Our main construction is given in Theorem~\ref{m_construction}. This is  
the first result
constructing a nonuniform function between degree structures in
computability theory from any sort of hypothesis. 

Suppose we want to construct a counterexample to part I of Martin's
conjecture. That is, we want to build a Turing invariant function $f \from \cantor \to
\cantor$ such that the Turing degree of $f$ is not constant on a cone, and
$f(x) \ngeq_T x$ on a cone. An obvious strategy is to build $f$ in countably many
stages. At stage $n$, we determine some partial information about $f(x)$
in order to diagonalize against $f(x)$ computing $x$ via the $n$th Turing
reduction. At stage $n$ we also specify how to
``code'' $f(y)$ into $f(x)$ for some of the $y$ such that $y \equiv_T x$ (to ensure
that at the end of the construction, $f$ is Turing invariant). Now consider the
relation $E_n$ where $x \mathrel{E_n} y$ if both $f(x)$ has been coded into
$f(y)$ and $f(y)$ has been coded into $f(x)$ by the $n$th stage of the
construction. Clearly $E_n$ is an equivalence relation, $E_n \subset E_{n+1}$
for all $n$, and Turing equivalence is the union of these equivalence relations:
${\equiv_T} = \bigunion_n E_n$. 

A problem in attempts to construct counterexamples to Martin's
conjecture is that we know essentially nothing about the ways in which
Turing equivalence can be written as an increasing union, apart from Slaman
and Steel's original theorem that Turing equivalence is not hyperfinite. In
particular, it is open whether every way of writing Turing equivalence as an
increasing union ${\equiv_T} = \bigunion_n E_n$ must be trivial in the
sense that there is some $n$ and some pointed perfect set $P$ where $E_n$
is already equal to Turing equivalence, i.e. 
$E_n
\restriction P = (\equiv_T \restriction P)$ (see
Conjecture~\ref{conj:non_approx}). If Conjecture~\ref{conj:non_approx} is
true, attempts to build counterexamples to Martin's conjecture in the way
indicated above seem hopeless.

In the authors' opinion, understanding how Turing equivalence may be
expressed as an increasing union, and Slaman and Steel's
Question~\ref{hyp-rec-fin} seem to be vital steps towards understanding
Martin's conjecture. If Question~\ref{hyp-rec-fin} has a
positive answer, one can hope to improve on the construction in
Theorem~\ref{m_construction} to give a counterexample to Martin's conjecture. 
If Question~\ref{hyp-rec-fin} has a negative answer, perhaps
Conjecture~\ref{conj:non_approx} is true, and there is no nontrivial way of
approximating Turing equivalence from below in countably many stages.

\subsection{Preliminaries}

Our conventions and notation are largely standard. For background on
Martin's conjecture, see \cite{MSS}. For a recent survey of the field of
countable Borel equivalence relations, see \cite{K19}. 

We use lowercase $x,y,z$ to denote elements of $2^\omega$, and $f,g$ for
functions on $2^\omega$.
If $x \in \cantor$, we use $\overline{x}$ to denote the real obtained by
flipping all the bits of $x$ (or the complement of $x$, viewing $x$ as a
subset of $\omega$). If $f \from \cantor \to \cantor$, we similarly use
$\overline{f}$ to denote the function where $\overline{f}(x) = \overline{f(x)}$
for all $x$. If $A \subset \omega$ and $x \in \cantor$, we let $x
\restriction A$ denote the restriction of the function $x$ to $A$.
Equivalently, viewing elements of $2^\omega$ as subsets of $\omega$, $x
\restriction A$ is $x \inters A$. Provided $y \in 2^\omega$ is not the
constant sequence of all 1s, if $A$ is computable, then $x
\restriction A \leq_m y$ if and only if there is a computable function $\rho \from A
\to \omega$ so that for all $n \in A$, $x(n) = y(\rho(n))$. This is because
given such a $\rho \from A \to \omega$, we can fix $n_0$ so $y(n_0) = 0$,
and define $\rho' \from \omega \to \omega$ by $\rho'(n) = \rho(n)$ if $n
\in A$ and $\rho'(n) = n_0$ otherwise. Then $\rho'$ gives a many-one
reduction of $x \inters A$ to $y$. 

Fix a computable bijection $\< \cdot, \cdot \> \from \omega^2 \to
\omega$. We will assume that for all $i, j$ we have $\<i,j\> \geq i$ and
$\<i,j\> \geq j$. If $A \subset \omega$, the \define{$i$th column of $A$}
is $A^{[i]} = \{\<i,j\> \in A \colon j \in \omega\}$. 

\section{Versions of Martin's conjecture for invariant functions from Turing
to many-one degrees}
\label{sec:Tm}

In \cite{KM}, Kihara and Montalb\'an study uniformly degree invariant functions from Turing
degrees to many-one degrees. One of our main results is that if
Slaman and Steel's question has a positive answer, then there is a Borel
degree invariant function $f \colon \cantor \to \cantor$ sending Turing degrees to many-one degrees which is not uniformly
Turing invariant on any pointed perfect set, and is also everywhere $\leq_m$-incomparable with any given countable collection of functions from $\cantor$ to $\cantor$ whose values are all incomputable. 
In this section, we briefly
discuss some open problems around such functions which are suggested by
Kihara and Montalb\'an's work. 

Recall that a function $f \from \cantor \to
\cantor$ is \define{\mbox{$(\equiv_T,\equiv_m)$}-invariant} if $x \equiv_T y$ implies $f(x)
\equiv_m f(y)$. (In the terminology of Borel equivalence relations, we
would say
$f$ is a \define{homomorphism} from $\equiv_T$ to $\equiv_m$.) 
A function $f \from \cantor \to
\cantor$ is \define{uniformly $(\equiv_T,\equiv_m)$-invariant} if there is
a function $u \from \omega^2 \to \omega^2$ so that if $x \equiv_T y$ via
the programs $(i,j)$, then $f(x)
\equiv_m f(y)$ via the programs $u(i,j)$.
If $c \in
\cantor$, then $x
\leq^c_m y$ if there is a function $\rho \from \omega \to \omega$
computable from $c$ so that $x(n) = y(\rho(n))$ for all $n$.
If $f, g \from \cantor
\to \cantor$, then we write $f \leq^\cone_m g$ if there is a Turing cone of
$x$ with base $c$ so that $f(x) \leq^c_m g(x)$. 

Kihara and Montalb\'an show that uniformly $(\equiv_T,\equiv_m)$-invariant functions
are
well-quasi-ordered by $\leq_m^\cone$ and are in
bijective correspondence with Wadge degrees via a simple map they define \cite{KM}. It follows from this bijection with Wadge degrees that the
smallest uniformly $(\equiv_T,\equiv_m)$-invariant functions which are not
constant on a cone are the Turing
jump: $x \mapsto x'$ and its complement $x \mapsto \overline{x'}$, which
are easily seen to correspond to the maps
associated to universal open and closed sets; the lowest nontrivial classes
in the Wadge hierarchy. 

Implicit in Kihara and Montalb\'an's work are obvious analogues of Martin's
conjecture \cite[Conjecture I, II]{SS} and Steel's conjecture
\cite[Conjecture III]{SS} for $(\equiv_T,\equiv_m)$-invariant functions. We
state these conjectures:

\begin{conj}[Martin's conjecture for
$(\equiv_T,\equiv_m)$-invariant functions]\label{conj:M_for_m} Assume $\AD + \DC$. Then 
\begin{enumerate}
\item [I.]
  If $f \from \cantor \to \cantor$ is $(\equiv_T,\equiv_m)$-invariant and the many-one degree
  $[f(x)]_m$ of $f$ is not constant on a Turing cone of $x$, then 
  $f \geq^\cone_m j$, or $f \geq^\cone_m \overline{j}$, where $j(x) = x'$
  is the Turing jump. 
\item[II.] If $f, g \from \cantor \to \cantor$ are $(\equiv_T,\equiv_m)$-invariant, then $f \geq^\cone_m
g$ or $\overline{g} \geq^\cone_m f$.
Furthermore, the order $\leq^\cone_m$ well-quasi-orders the functions on
$\cantor$ that
are $(\equiv_T,\equiv_m)$-invariant.
\end{enumerate}
\end{conj}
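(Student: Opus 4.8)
The plan is to reduce Conjecture~\ref{conj:M_for_m} to its restriction to \emph{uniformly} $(\equiv_T,\equiv_m)$-invariant functions, which is essentially already available. For uniformly $(\equiv_T,\equiv_m)$-invariant $f$, Kihara and Montalb\'an's bijection with the Wadge hierarchy \cite{KM} --- once one checks, by unwinding their map, that it carries $\leq_m^\cone$ to continuous (Wadge) reducibility and carries $j, \overline{j}$ to the universal open and closed sets --- turns part (II) into Wadge's lemma (semilinear ordering of the Wadge hierarchy, from $\AD$) together with the wellfoundedness of the Wadge hierarchy (Martin--Monk, using $\AD+\DC$), and turns part (I) into the observation that the universal open and closed sets are the least nontrivial Wadge classes, already noted in this section. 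So the real content of the conjecture is: every $(\equiv_T,\equiv_m)$-invariant function agrees, on a Turing cone (better: on a pointed perfect set), with a uniformly $(\equiv_T,\equiv_m)$-invariant one.

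For part (I) one can hope to sidestep full uniformization. Assuming $f$ is $(\equiv_T,\equiv_m)$-invariant with $f \not\geq_m^\cone j$ and $f \not\geq_m^\cone \overline{j}$, pass by Turing determinacy to a cone, with base $c$ say, on which all the relevant reducibility statements are decided, and then argue that $[f(x)]_m$ is already constant there. The point would be that $\leq_m$ is far more rigid than $\leq_T$: a many-one reduction transports membership bit for bit and cannot, say, encode into a computed bit the answer to a $\Sigma^0_1$ question about its oracle, so if $f(x)$ fails to $\leq_m^c$-compute $x'$ and also fails to $\leq_m^c$-compute $\overline{x'}$, then $f(x)$ cannot ``see'' any genuinely new information about $x$ beyond what is available from $c$. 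Making ``$f$ cannot see $x$'' precise is where I would bring in a Posner--Robinson / jump-inversion step, realizing sufficiently generic oracles $x$ inside the cone over $c$ so that any many-one pattern $f$ extracts from $x$ must already be present in $c$.

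The reduction for part (II) --- and, failing the above, for part (I) too --- is to produce, on a cone or pointed perfect set, a uniformly $(\equiv_T,\equiv_m)$-invariant $g$ with $g \equiv_m^\cone f$, and then quote the uniform case. I expect this to be the main obstacle, and it is why the statement is only a conjecture: it is the exact $(\equiv_T,\equiv_m)$-analogue of the still-open passage from Martin's conjecture for uniformly invariant functions to the general case, and the constructions of this paper, notably Theorem~\ref{m_construction}, show that genuinely non-uniform invariant functions between degree structures can exist (at least under hypotheses such as a positive answer to Question~\ref{hyp-rec-fin}), so any such uniformization must exploit the rigidity of the many-one target essentially. Concretely I would attempt a measure- or category-theoretic argument on the space of pairs of indices witnessing invariance of $f$, hoping that for the $\equiv_m$ target this witness set is definable enough to admit a uniform selector on a cone; but carrying this out seems to require genuinely new understanding of how $\equiv_T$ can be approximated from below --- precisely the circle of ideas surrounding Question~\ref{hyp-rec-fin} and Conjecture~\ref{conj:non_approx}.
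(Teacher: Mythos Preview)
The statement is a \emph{conjecture}, not a theorem, and the paper does not prove it; it merely states it and remarks that Conjecture~\ref{conj:steel_for_m} implies it via Kihara--Montalb\'an. Your proposal is not a proof either, and to your credit you say as much: you correctly observe that the uniform case is handled by \cite{KM} together with Wadge's lemma and Martin--Monk, and that the entire content of the conjecture lies in the uniformization step (passing from an arbitrary $(\equiv_T,\equiv_m)$-invariant $f$ to a uniformly invariant $g$ with $f \equiv_m^\cone g$), which is precisely Conjecture~\ref{conj:steel_for_m} and is open.

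The one part of your proposal that goes beyond restating known reductions is the sketch for part (I), where you suggest bypassing uniformization via a Posner--Robinson/jump-inversion argument exploiting the rigidity of $\leq_m$. This is an interesting heuristic, but it is not an argument: the claim that ``if $f(x)$ fails to $\leq_m^c$-compute $x'$ and $\overline{x'}$ then $f(x)$ cannot see any new information about $x$'' is exactly what needs to be proved, and nothing you have written indicates how genericity of $x$ over $c$ would force $[f(x)]_m$ to be constant. Indeed, the results of this paper (Corollary~\ref{m_cor}.(1)) show that under a positive answer to Question~\ref{hyp-rec-fin} there exist $(\equiv_T,\equiv_m)$-invariant functions that are not uniform on any pointed perfect set, so any argument for part (I) that routes through uniformization on a cone is blocked in that scenario, and a genuinely different idea would be needed.
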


\begin{conj}[Steel's conjecture for $(\equiv_T,\equiv_m)$-invariant
functions]\label{conj:steel_for_m}
  Suppose $\AD + \DC$, and suppose $f \from \cantor \to \cantor$ is
  $(\equiv_T,\equiv_m)$-invariant. Then there is a
  uniformly $(\equiv_T,\equiv_m)$-invariant function $g$ so that $f
  \equiv^\cone_m g$. 
\end{conj}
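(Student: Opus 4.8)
The plan is to attack Conjecture~\ref{conj:steel_for_m} by imitating the strategy that, in the classical setting, links Steel's conjecture to Martin's conjecture, now exploiting Kihara--Montalb\'an's bijection between uniformly $(\equiv_T,\equiv_m)$-invariant functions and Wadge degrees. Given a $(\equiv_T,\equiv_m)$-invariant $f$, I would work in $\AD+\DC$ and try to show that on some Turing cone $f(x)$ is many-one equivalent to a $\mathbf{\Gamma}$-universal set relative to $x$, for a fixed boldface pointclass $\mathbf{\Gamma}$ --- that is, that $f$ agrees on a cone, up to $\equiv_m$, with one of the functions in the range of Kihara--Montalb\'an's map. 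Since all those functions are visibly uniformly $(\equiv_T,\equiv_m)$-invariant, this would give the desired $g$ with $f \equiv^\cone_m g$.

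\textbf{Key steps.} First, use the Martin (cone) measure: under $\AD+\DC$ it is a countably complete ultrafilter on the Turing degrees, so any function from $\cantor$ into a countable set that is constant on Turing degrees is constant on a cone. Second, for each pair $(\Phi_i,\Phi_j)$ of Turing functionals let $A_{i,j} = \{x : \Phi_j^{\Phi_i^x} = x\}$, the set of $x$ witnessed by $(i,j)$ to satisfy $\Phi_i^x \equiv_T x$; $(\equiv_T,\equiv_m)$-invariance of $f$ then yields $f(\Phi_i^x) \equiv_m f(x)$ on $A_{i,j}$. One would then want to bundle all of the reals $f(\Phi_i^x)$ into a single $g(x)$ --- a suitably coded join --- so that $f(x) \leq_m g(x)$ holds uniformly (taking $\Phi_i$ to be the identity functional, $f(x)$ itself is one coordinate of $g(x)$), and so that $g(x) \leq^c_m f(x)$ holds on a cone because, for each fixed $\langle i,j\rangle$, the least index of a many-one reduction from $f(\Phi_i^x)$ to $f(x)$ stabilizes on a cone. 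Third, identify the uniformly invariant $g$ so produced with a Wadge-degree function via Kihara--Montalb\'an, completing the argument.

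The main obstacle --- and the reason this should be treated as a genuinely open problem rather than an exercise --- is the bundling step. To assemble the reals $f(\Phi_i^x)$ coherently is essentially to enumerate a cross-section of the Turing degree of $x$ in a way controlled by $f$, and whether this can be done is precisely the content of Slaman and Steel's Question~\ref{hyp-rec-fin}. Moreover, the constructions of this paper (Theorem~\ref{m_construction}, Corollary~\ref{m_cor}) show that a positive answer to that question yields $(\equiv_T,\equiv_m)$-invariant functions with strong non-uniformity, so one might reasonably doubt Conjecture~\ref{conj:steel_for_m} as well; conversely, something like a negative answer to Slaman--Steel together with Conjecture~\ref{conj:non_approx} seems to be what one would need to make the canonicalization go through. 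A sensible first target, parallel to the known cases of the classical conjecture, is to establish Conjecture~\ref{conj:steel_for_m} for order-preserving $f$, where the witnessing reductions can be chosen coherently and the stabilization argument above avoids the Slaman--Steel difficulty.
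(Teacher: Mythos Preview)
The statement you are addressing is Conjecture~\ref{conj:steel_for_m}, and the paper offers no proof of it: it is explicitly posed as an open problem, an analogue of Steel's conjecture for $(\equiv_T,\equiv_m)$-invariant functions. So there is no ``paper's own proof'' to compare against, and your write-up is not a proof either --- as you yourself acknowledge in the final paragraph.

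That said, your discussion correctly locates the difficulty and is consistent with the paper's perspective. One remark on the sketch: the step where you claim that ``for each fixed $\langle i,j\rangle$, the least index of a many-one reduction from $f(\Phi_i^x)$ to $f(x)$ stabilizes on a cone'' does not follow from the Martin measure argument you cite. The function $x \mapsto (\text{least such index})$ is not, a priori, constant on Turing degrees: if $x \equiv_T y$ we know $f(x) \equiv_m f(y)$ and $f(\Phi_i^x) \equiv_m f(\Phi_i^y)$, but the witnessing many-one indices for $f(\Phi_i^x) \leq_m f(x)$ and $f(\Phi_i^y) \leq_m f(y)$ need not coincide. If that stabilization step went through, countable additivity of Martin measure would already give uniform invariance of $f$ on a cone --- i.e., Conjecture~\ref{conj:strong_steel_for_m} --- which is strictly stronger than what you are aiming for and, per Corollary~\ref{m_cor}(1), would fail if $\equiv_T$ is hyper-recursively-finite. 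So the obstacle is not only the ``bundling'' of the $f(\Phi_i^x)$ into a join; it is already present in the individual columns. Your closing suggestion to first attack the order-preserving case is reasonable and parallels the known partial results for the classical conjecture, but the paper does not pursue this.
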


Conjecture~\ref{conj:steel_for_m} implies Conjecture~\ref{conj:M_for_m} by
Kihara and Montalb\'an's work in \cite{KM}.

There is an important
relationship between Turing invariant functions and
\mbox{$(\equiv_T,\equiv_m)$}-invariant functions. Since
$x \leq_T y$ if and only if $x' \leq_m y'$, 
any Turing invariant
function can be turned into a $(\equiv_T,\equiv_m)$-invariant function by
applying the Turing jump. However, because of the parameter $c$ in the
definition of $\leq^\cone_m$, it is not true that if $f'
\geq_m^\cone g'$, then $f(x) \geq_T g(x)$ on a Turing cone of $x$. In
particular, we do not
know whether Conjecture~\ref{conj:M_for_m} and
Conjecture~\ref{conj:steel_for_m} imply Martin's conjecture and Steel's
conjecture. 
However, if we strengthen Conjecture~\ref{conj:steel_for_m} to
use the relation ``$\leq_m$ on a cone'' rather than $\leq^\cone_m$, then we
do obtain a strengthening of Steel's conjecture \cite[Conjecture III]{SS}.

\begin{conj} \label{conj:strong_steel_for_m}
  Suppose $\AD$, and suppose $f$ is $(\equiv_T,\equiv_m)$-invariant. 
  Then there is a
  uniformly $(\equiv_T,\equiv_m)$-invariant function $g$ so that $f(x)
  \equiv_m g(x)$ on a Turing cone of $x$. 
\end{conj}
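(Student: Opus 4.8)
This is the strongest of the three conjectures of this section; I expect a proof to be hard, and here is the route I would try and where I think it stalls. Work under $\AD$ with the Martin cone measure, and fix a $(\equiv_T,\equiv_m)$-invariant $f$. The target $g$ should be $g_A$ for a suitable Wadge degree $A$: by the Kihara--Montalb\'an bijection, every uniformly $(\equiv_T,\equiv_m)$-invariant function is (up to $\equiv^\cone_m$) of this form, where $g_A(x)$ records, across the finite approximations to the $x$-computable reals $\Phi_e^x$, whether they enter $A$, and where $x \equiv_T y$ via programs $(i,j)$ permutes the ``query slots'' of $g_A(x)$ onto those of $g_A(y)$ computably in $(i,j)$.

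The first step is to read a Wadge degree $A_f$ off of $f$ --- morally, $A_f$ encodes the graph of $f$ restricted to a cone, cut down to the information about $f$ that is visible along a single Turing degree --- and then to prove $f(x) \equiv_m g_{A_f}(x)$ on a cone. One direction should be routine once $A_f$ is chosen correctly: each bit $n \in f(x)$ is itself one of the queries ``$z \in A_f$?'' that $g_{A_f}(x)$ answers, taking $z$ to be an $x$-computable real of the same Turing degree as $x$ that codes the pair $(x,n)$, and this gives $f(x) \leq_m g_{A_f}(x)$ via a many-one reduction depending only on $n$. The reverse inequality $g_{A_f}(x) \leq_m f(x)$ carries the content: $g_{A_f}(x)$ also records queries about $x$-computable reals $z$ with $z <_T x$, and those bits are not controlled by $f(x)$ for a general homomorphism, so one must arrange $A_f$ and the cone so that these queries collapse, using only the invariance of $f$ along Turing degrees.

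The obstacle that runs through both the choice of $A_f$ and the verification is uniformity. Being $(\equiv_T,\equiv_m)$-invariant supplies, for each $x$ and each pair of programs $(i,j)$ witnessing $z \equiv_T x$, only \emph{some} pair of many-one reductions between $f(x)$ and $f(z)$, with no a priori control over how they vary with $x$; converting this into a single plain-computable many-one reduction valid on a whole cone --- as ``$\equiv_m$ on a cone'' demands, in contrast with $\equiv^\cone_m$ --- is exactly the kind of uniformization this paper studies. By Corollary~\ref{m_cor}, a positive answer to Slaman and Steel's Question~\ref{hyp-rec-fin} produces a $(\equiv_T,\equiv_m)$-invariant function that cannot be uniformized on any pointed perfect set, so the uniformization needed here cannot be had for soft reasons; I expect a proof of Conjecture~\ref{conj:strong_steel_for_m} to go hand in hand with a negative answer to Question~\ref{hyp-rec-fin} (equivalently, with a proof that $\equiv_T$ has no nontrivial hyper-Borel-finite approximation, in the spirit of Conjecture~\ref{conj:non_approx}). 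Finally, granting the conjecture one obtains a strengthening of Steel's conjecture \cite[Conjecture III]{SS} by applying it to $x \mapsto f(x)'$ for Turing-invariant $f$: the resulting $g$ is automatically uniformly Turing-invariant, and because the reductions live on a cone rather than in $\leq^\cone_m$, the equivalence $x \leq_T y \iff x' \leq_m y'$ transfers the conclusion back down to $\equiv_T$ without the parameter $c$ that obstructed the corresponding implication from Conjecture~\ref{conj:steel_for_m}.
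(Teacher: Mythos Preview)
The statement is Conjecture~\ref{conj:strong_steel_for_m}, which the paper poses as an \emph{open conjecture} and does not prove; there is no proof in the paper to compare your proposal against. You correctly recognize this: your write-up is explicitly a sketch of an approach together with an explanation of where it stalls, not a claimed proof.

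Your discussion is broadly consistent with the paper's own commentary. The paper likewise observes (Corollary~\ref{m_cor}.(1)) that a positive answer to Question~\ref{hyp-rec-fin} would produce a $(\equiv_T,\equiv_m)$-invariant $f$ that is not uniform on any pointed perfect set, hence falsifying Conjecture~\ref{conj:strong_steel_for_m}; and it proves (Proposition following Conjecture~\ref{conj:strong_steel_for_m}) that the conjecture implies Steel's conjecture via $x \mapsto f(x)'$, exactly as you describe in your final paragraph. Your identification of the core obstruction---that ``$\equiv_m$ on a cone'' demands a parameter-free many-one reduction while invariance alone yields reductions varying with $x$---is precisely the point the paper makes in distinguishing Conjecture~\ref{conj:strong_steel_for_m} from Conjecture~\ref{conj:steel_for_m}.

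One small caution on your proposed route: the reduction $f(x) \leq_m g_{A_f}(x)$ you sketch, sending $n$ to a query about an $x$-computable real coding $(x,n)$, is not obviously a many-one reduction in the strict sense, since the map $n \mapsto (\text{index of the query slot})$ must be a fixed computable function not depending on $x$; whether this can be arranged depends on the precise definition of $g_A$ and the choice of $A_f$, and is part of the same uniformity problem you flag for the reverse direction. But since you are not claiming a proof, this is a refinement rather than a gap.
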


A standard argument (see the first footnote in \cite{MSS}) shows that if
$f$ is $(\equiv_T,\equiv_m)$-invariant, then $f(x)
  \equiv_m g(x)$ on a cone for some uniformly
  $(\equiv_T,\equiv_m)$-invariant function $g$ if and only if $f$ is itself uniformly
  $(\equiv_T,\equiv_m)$-invariant on a
  pointed perfect set. 

\begin{prop}
  Conjecture~\ref{conj:strong_steel_for_m} implies Steel's conjecture,
  \cite[Conjecture III]{SS}.
\end{prop}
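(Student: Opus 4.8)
The plan is to show that a counterexample to Steel's conjecture, \cite[Conjecture III]{SS}, would yield a counterexample to Conjecture~\ref{conj:strong_steel_for_m}, using the Turing jump as the bridge between the two settings. Recall that Steel's conjecture asserts (under $\AD$) that every Turing invariant function $f \from \cantor \to \cantor$ is uniformly Turing invariant on a pointed perfect set, equivalently that $f(x) \equiv_T g(x)$ on a cone for some uniformly Turing invariant $g$. So suppose $f \from \cantor \to \cantor$ is a Turing invariant function witnessing a failure of Steel's conjecture. I would consider the function $\tilde f(x) = f(x)'$, the Turing jump of $f(x)$. Since $f$ is Turing invariant and the jump is uniformly Turing invariant (indeed $x \equiv_T y$ via $(i,j)$ computably yields programs witnessing $x' \equiv_m y'$, hence $x' \equiv_T y'$), $\tilde f$ is $(\equiv_T,\equiv_m)$-invariant — in fact it is $(\equiv_T,\equiv_T)$-invariant, but that is immaterial here.

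Next I would apply Conjecture~\ref{conj:strong_steel_for_m} to $\tilde f$: it produces a uniformly $(\equiv_T,\equiv_m)$-invariant function $g$ with $\tilde f(x) \equiv_m g(x)$ on a Turing cone of $x$. The crucial point is to extract from $g$ a uniformly Turing invariant function that is $\equiv_T$-equivalent to $f$ on a cone, contradicting our choice of $f$. Since $f(x) \equiv_T y$ iff $f(x)' \equiv_m y'$, and $\tilde f(x) = f(x)'$ is always a jump (so in particular is not below its own ``predecessor'' under $\leq_m$), one recovers the many-one (indeed Turing) degree of $\tilde f(x)$, and hence the Turing degree of $f(x)$, from the many-one degree of $g(x)$. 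Concretely, define $h(x)$ to be (a canonical real computing) the set of finite strings $\sigma$ such that $\sigma$ is an initial segment of some $z$ with $z' \equiv_m g(x)$ via the uniformity functions attached to $g$ and $\tilde f$; since $g$ is uniformly invariant and jump-inversion is uniform, $h$ will be uniformly Turing invariant, and $h(x) \equiv_T f(x)$ on the cone where $\tilde f(x) \equiv_m g(x)$. This contradicts the assumption that $f$ witnesses the failure of Steel's conjecture, establishing the proposition.

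The main obstacle I anticipate is the bookkeeping in the last step: passing from ``$\tilde f(x) \equiv_m g(x)$ via uniform programs depending on how $x \equiv_T y$'' back to ``$f(x) \equiv_T f(y)$ via uniform programs,'' one must check that the jump-inversion and the passage through the many-one witnesses can be carried out uniformly in the reduction $(i,j)$ witnessing $x \equiv_T y$, and that the resulting $h$ is genuinely a total function on all of $\cantor$ (not just on the cone) of the required form. One also needs the elementary but slightly delicate fact that the many-one degree of a set of the form $z'$ determines the Turing degree of $z$ — this is exactly the equivalence ``$x \leq_T y \iff x' \leq_m y'$'' quoted in the excerpt, applied in both directions. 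Once these uniformity details are pinned down, the argument is a direct translation, and the alternative formulation of ``$\equiv_m g$ on a cone'' as ``uniformly invariant on a pointed perfect set'' (quoted just before the proposition) can be used to phrase the conclusion in whichever form is most convenient.
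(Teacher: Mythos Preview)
Your overall strategy matches the paper's: compose $f$ with the Turing jump to obtain a $(\equiv_T,\equiv_m)$-invariant $\tilde f(x)=f(x)'$, apply Conjecture~\ref{conj:strong_steel_for_m}, and then transfer the conclusion back to $f$. Where you diverge from the paper is in the last step.

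Your primary route---building a uniformly Turing invariant $h$ directly out of the $g$ produced by the conjecture---does not go through as written. The many-one equivalence $\tilde f(x)\equiv_m g(x)$ guaranteed on a cone is \emph{not} uniform in $x$, so there are no ``uniformity functions attached to $\tilde f$'' to invoke; that is precisely what we are trying to establish. And the specific $h(x)$ you propose (the set of initial segments of reals $z$ with $z'\equiv_m g(x)$) is not evidently Turing equivalent to $f(x)$: this set is a tree whose paths are exactly $[f(x)]_T$, and neither computing a path from the tree nor computing the tree from a single path is uniform in the required sense.

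The paper takes exactly the alternative you mention in your last sentence, and that is where the argument becomes a two-line proof. Using the remark immediately preceding the proposition, Conjecture~\ref{conj:strong_steel_for_m} applied to $\tilde f$ says that $\tilde f$ itself is uniformly $(\equiv_T,\equiv_m)$-invariant on some pointed perfect set $P$. Now one simply observes that the implication $a'\leq_m b'\Rightarrow a\leq_T b$ is effective: from an $m$-reduction index $e$ one computes a Turing reduction of $a$ to $b$ (decide $m\in a$ by dovetailing the $y'$-computations at $\rho_e(p(m))$ and $\rho_e(q(m))$, where $p(m),q(m)$ are fixed indices witnessing $m\in a$ and $m\notin a$ in $a'$). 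Hence uniform $(\equiv_T,\equiv_m)$-invariance of $x\mapsto f(x)'$ on $P$ immediately yields uniform Turing invariance of $f$ on $P$, with no jump-inversion or construction of an auxiliary $h$ needed.
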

\begin{proof}
  Suppose $f \from \cantor \to \cantor$ is Turing invariant. Then by 
  Conjecture~\ref{conj:strong_steel_for_m}, the map $x \mapsto f(x)'$,
  is uniformly $(\equiv_T,\equiv_m)$-invariant on a pointed perfect set.
  Hence $f$ is uniformly Turing invariant on the same pointed
  perfect set.
\end{proof}

Kihara and Montalb\'an's work is more generally
stated for functions to the space $\mathcal{Q}^\omega$, where $\mathcal{Q}$
is a better-quasi-order. One can more generally ask about the
analogues of the above conjectures for functions to $\mathcal{Q}^\omega$. We
have the following observation due to Kihara and Montalb\'an that the relation $\leq^\cone_m$ cannot be
replaced with ``$\leq_m$ on a cone'' in their work when $\mathcal{Q} \neq
2$:

\begin{prop}[Kihara and Montalb\'an, private communication]
  Suppose $\AD$. Then the $(\equiv_T,\equiv_m)$-invariant functions from
  $2^\omega$ to $3^\omega$ which are not constant on a cone are not
  well-quasi-ordered by the relation ``$\leq_m$ on a cone''.
\end{prop}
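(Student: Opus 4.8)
A quasi-order fails to be a well-quasi-order as soon as it has an infinite antichain, so the plan is to build an infinite family $f_0, f_1, \dots$ of $(\equiv_T,\equiv_m)$-invariant functions $\cantor \to 3^\omega$, none constant on a cone, which forms an antichain under ``$\leq_m$ on a cone''. The idea is to use the third colour to ``paint'' a copy of the Turing jump with a pattern that a parameter-free many-one reduction cannot transport. Concretely, I would fix a family $\langle R_n : n \in \omega\rangle$ of subsets of $\omega$, each computable from $\emptyset'$, that are pairwise incomparable in the many-one degrees --- for instance any infinite antichain in the c.e.\ many-one degrees, which exists by a standard finite-injury construction --- and define
\[
  f_n(x)(\langle e,k\rangle) = \begin{cases}
    1 & \text{if } \Phi^x_e(e)\downarrow \text{ and } k \in R_n,\\
    2 & \text{if } \Phi^x_e(e)\downarrow \text{ and } k \notin R_n,\\
    0 & \text{if } \Phi^x_e(e)\uparrow.
  \end{cases}
\]
So $f_n(x)$ records $x'$ in its nonzero positions, with the value at $\langle e,k\rangle$ further determined by whether $k \in R_n$.

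First I would verify the easy requirements. That $f_n$ is $(\equiv_T,\equiv_m)$-invariant is the usual bookkeeping showing the uniformly invariant functions of Kihara--Montalb\'an are invariant: if $x \equiv_T y$, a relativized $s$-$m$-$n$ argument yields a computable $\tau$ with $\Phi^y_{\tau(e)}(\tau(e))\downarrow \iff \Phi^x_e(e)\downarrow$ for all $e$, so $f_n(x) = f_n(y)\circ\sigma$ for the computable $\sigma(\langle e,k\rangle) = \langle\tau(e),k\rangle$, and symmetrically $f_n(y)\leq_m f_n(x)$; indeed this makes each $f_n$ uniformly $(\equiv_T,\equiv_m)$-invariant. (The same computation also shows $f_n \equiv^\cone_m f_m$ for all $n,m$ --- a reduction translating the $R_n$-pattern into the $R_m$-pattern is computable from $R_n \oplus R_m$ --- so the failure of the well-quasi-order genuinely comes from the absence of the cone parameter, consistently with Kihara--Montalb\'an's theorem that $\leq^\cone_m$ \emph{does} well-quasi-order these functions.) To see no $f_n$ is constant on a cone: fixing $k_0 \in R_n$, the computable map $e \mapsto \langle e,k_0\rangle$ witnesses $x' \leq_m f_n(x)$ for every $x$, while $f_n(c) \leq_T c' \oplus R_n \leq_T c'$ (as $R_n \leq_T \emptyset'$); so if $[f_n(x)]_m$ were constant on the cone above $c$, we would get $c'' \leq_m f_n(c')$ together with $f_n(c') \equiv_m f_n(c)$, hence $c'' \leq_T f_n(c) \leq_T c'$, a contradiction.

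The heart of the argument, and the step I would set up most carefully, is that $f_n \not\leq_m f_m$ on any cone whenever $n \neq m$. Suppose otherwise; choose $x$ in the relevant cone, so $f_n(x) = f_m(x) \circ \rho$ for some computable $\rho$. Let $e_0$ be an index of a program that halts immediately, so $\Phi^x_{e_0}(e_0)\downarrow$. Then $f_n(x)(\langle e_0,k\rangle) \in \{1,2\}$ for every $k$, hence $f_m(x)(\rho(\langle e_0,k\rangle)) \in \{1,2\}$; writing $\rho(\langle e_0,k\rangle) = \langle a(k),b(k)\rangle$, this forces $\Phi^x_{a(k)}(a(k))\downarrow$, so that $f_m(x)(\rho(\langle e_0,k\rangle)) = 1$ precisely when $b(k) \in R_m$. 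Since $f_n(x)(\langle e_0,k\rangle) = 1$ precisely when $k \in R_n$, the computable function $b$ witnesses $R_n \leq_m R_m$, contradicting the choice of the $R_n$. Thus the $f_n$ are an infinite antichain and the class is not well-quasi-ordered. I do not expect a genuinely hard obstacle here: the only real content is deciding to encode an $m$-incomparable pattern into the extra colour, after which both the invariance and the non-reduction are short. (The argument uses no determinacy; $\AD$ is inherited from the surrounding discussion but is not needed.)
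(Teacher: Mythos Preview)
Your proof is correct and takes a genuinely different route from the paper's. The paper appeals to the fact (from \cite[Theorem~3.6]{M}) that $\equiv_m$ on $3^\omega$ is a uniformly universal countable Borel equivalence relation: a uniform Borel reduction $f \from \cantor \times \R \to 3^\omega$ from $\equiv_T \times {=_\R}$ to $\equiv_m$ gives a family $\{f_y : y \in \R\}$ of uniformly $(\equiv_T,\equiv_m)$-invariant functions, and since $f$ is a reduction, distinct $y,y'$ yield $f_y, f_{y'}$ that are nowhere $\equiv_m$-equivalent (indeed, by inspection of the cited construction, pairwise $\leq_m$-incomparable). This produces an uncountable antichain in one stroke by quoting a black-box universality theorem. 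Your argument is instead entirely self-contained and computability-theoretic: you encode a fixed $m$-antichain of c.e.\ sets into the extra colour, and the incomparability of the $f_n$ reduces immediately to the incomparability of the $R_n$. What your approach buys is elementarity and transparency---no machinery from Borel equivalence relations is needed, and one sees directly why the failure is about the \emph{parameter-free} reduction (your parenthetical check that $f_n \equiv^\cone_m f_m$ makes this explicit). What the paper's approach buys is a stronger conclusion (a continuum-sized antichain rather than a countable one) and a tighter link to the paper's broader theme of uniform universality. Your observation that $\AD$ is not actually used is also correct, and applies equally to the paper's argument.
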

\begin{proof}
By
\cite[Theorem 3.6]{M}, many-one reducibility on $3^\omega$ is a uniformly
universal countable Borel equivalence relation. Letting $=_\R$ denote
equality on the real numbers, there is hence a uniform
Borel reduction $f \from \cantor \times \R \to 3^\omega$ from 
 $\equiv_T \times =_\R$ to many-one reducibility on $3^\omega$. For each $y
 \in \R$, the function $f_y(x) = f(x,y)$ is thus a uniformly
 $(\equiv_T,\equiv_m)$-invariant function. Note that if $y \neq y'$, then
 $f_y(x)$ and $f_{y'}(x)$ are not
 $\equiv_m$-equivalent on a cone of $x$, nor are they constant on a cone
 (since $f$ is a Borel reduction). 

 Thus, the relation on Borel functions
 ``$\leq_m$ on a cone'' cannot be a well-quasi-order on the Borel
 uniformly
 $(\equiv_T,\equiv_m)$-invariant functions
 from $2^\omega$ to $3^\omega$, since then it would therefore give a
 well-quasi-order of $\R$. 
\end{proof}

In fact, it is easy to see from the proof of \cite[Theorem 3.6]{M} that for
all $y, y'$ and all $z$, $f_y(z) \ngeq_m f_{y'}(z)$. So all the functions
$f_y$ constructed above are incomparable under $\leq_m$.

It is an open question whether the relation $\leq^\cone_m$ can be
replaced with ``$\leq_m$ on a cone'' in Kihara and Montalb\'an's theorem on
the space $2^\omega$. 

\begin{question}
  Assume $\AD + \DC$. 
  Is there an isomorphism between the Wadge
  degrees and the degrees of the uniformly $(\equiv_T,\equiv_m)$-invariant
  functions under the relation ``$\leq_m$ on a cone''? If 
  $f$ is uniformly $(\equiv_T,\equiv_m)$-invariant and the many-one degree
  $[f(x)]_m$ of $f$ is not constant on a Turing cone of $x$, then is
  $f(x) \geq_m j(x)$ on a cone, or $f(x) \geq_m \overline{j(x)}$ on
  a cone, where $j(x) = x'$
  is the Turing jump? 
\end{question}

\section{Slaman and Steel's Question}
\label{sec:hyper_recursive_finite}

The following notion is essentially due to Slaman and Steel:
\begin{defn}[\cite{SS}]\label{defn:hef}
  Suppose
  $(f_i)_{i \in \omega}$ is a countable sequence of Borel functions 
  $f_i \from X \to X^\omega$. Say that a countable Borel equivalence
  relation $F$ on $X$ is \define{$(f_i)_{i \in \omega}$-finite} if there is
  no $i \in \omega$ and $x \in X$ such that the set 
  $\{f_i(x)(j) \colon j \in \omega\}$ is infinite and $\{f_i(x)(j) \colon j \in
  \omega\} \subset [x]_F$. 
  That is, no 
  $f_i(x)$ is a sequence of infinitely many different elements in the
  $F$-class of $x$. 
  Say that $E$ is
  \define{hyper-$(f_i)_{i \in \omega}$-finite} if there is an increasing
  sequence $F_0 \subset F_1 \subset \ldots$ of Borel subequivalence
  relations of $E$ such that $F_n$ is $(f_i)_{i \in \omega}$-finite for
  every $n$, and $\bigunion_n F_n = E$. Finally, say that $E$
  is \define{hyper-Borel-finite} if for every countable collection of
  Borel functions
  $(f_i)_{i \in \omega}$ where $f_i \from X \to X^\omega$, $E$ is \define{hyper-$(f_i)_{i \in
  \omega}$-finite}.
\end{defn}

Here we can think of each set $\{f_i(x)(j)\}_{j \in \omega}$ as being a potential
witness that some $F$-class is infinite, which we would like to avoid.

Clearly every hyperfinite Borel equivalence relation is
hyper-Borel-finite. It is an open problem to characterize the
hyper-Borel-finite equivalence relations.

\begin{question}
  Is there a non-hyperfinite countable Borel equivalence relation that is
  hyper-Borel-finite?
\end{question}

\begin{question}
  Is every countable Borel equivalence relation hyper-Borel-finite?
\end{question}

Slaman and Steel consider the special case of Definition~\ref{defn:hef} where the function $f_i \from \cantor \to
  (\cantor)^\omega$ gives the columns from the real given by the $i$th Turing reduction $\Phi_i(x)$:
  \[f_i(x)(j) = \begin{cases} \{n \colon \<j,n\> \in \Phi_i(x)\} & \text{ if $\Phi_i(x)$ is total} \\
  x & \text{ otherwise}.
  \end{cases}  
  \] 
  We say that Turing equivalence is \define{hyper-recursively-finite} if $\equiv_T$ is
  hyper-$(f_i)_{i \in \omega}$-finite for the above functions $(f_i)_{i \in
  \omega}$.
  Slaman and Steel posed the question of whether Turing equivalence is
  hyper-recursively-finite in \cite[Question 6]{SS}, though in the setting of
  $\AD$ rather than just for Borel functions. We work in the Borel setting
  because it makes the statements of some of our theorems more
  straightforward. However,
  all the arguments of the paper can be adapted to the setting of $\AD$ as
  usual.

\begin{question}[\cite{SS}]\label{hyp-rec-fin}
  Is Turing equivalence hyper-recursively-finite?
\end{question}

  This problem about Turing equivalence is equivalent to the more
  general problem of whether every countable Borel equivalence relation is
  hyper-Borel-finite. This self-strengthening property of
  hyper-recursive-finiteness of $\equiv_T$ will be an essential ingredient in
  our proof of Theorem~\ref{m_construction}.

\begin{thm}\label{thm:hrf_equivalence}
  The following are equivalent:
  \begin{enumerate}
    \item $\equiv_T$ is hyper-recursively-finite.
    \item Every countable Borel equivalence relation $E$ is
    hyper-Borel-finite.
  \end{enumerate}
\end{thm}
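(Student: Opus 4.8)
The direction $(2) \Rightarrow (1)$ is immediate: the Turing reductions $\Phi_i$ are (partial) Borel functions, and the associated $f_i$ in the definition of hyper-recursive-finiteness are total Borel functions $\cantor \to (\cantor)^\omega$, so if every countable Borel equivalence relation is hyper-Borel-finite, in particular $\equiv_T$ is hyper-$(f_i)_{i\in\omega}$-finite for this specific sequence. The content is all in $(1) \Rightarrow (2)$.

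For $(1) \Rightarrow (2)$, let $E$ be an arbitrary countable Borel equivalence relation on a standard Borel space $X$, and let $(g_i)_{i\in\omega}$ be a countable sequence of Borel functions $g_i \from X \to X^\omega$; I want to produce an increasing sequence of $(g_i)$-finite Borel subequivalence relations of $E$ with union $E$. The plan is to transport the problem to $\equiv_T$ via a Borel reduction. By the Feldman--Moore theorem $E$ is generated by a Borel action of a countable group, and by a theorem of Slaman--Steel (or its standard strengthenings) there is a Borel embedding of $E$ into $\equiv_T$; more usefully, one wants a Borel reduction $\pi \from X \to \cantor$ of $E$ to $\equiv_T$ which is in fact an \emph{invariant} embedding onto an $\equiv_T$-invariant Borel set, or at least one that behaves well enough to pull back witnesses. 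The key point to arrange is that, given the finitely-many-at-a-time data of the $g_i$, one can build $\pi$ so that membership of $g_i(x)(j)$ in $[x]_E$ is tracked by a Turing reduction applied to $\pi(x)$: concretely, code into $\pi(x)$ enough information so that from $\pi(x)$ one can uniformly compute (the $\pi$-images of) the elements $g_i(x)(j)$ that actually lie in $[x]_E$, listed as columns. Then the hypothesis that $\equiv_T$ is hyper-recursively-finite gives an increasing sequence $F_n$ of recursively-finite Borel subequivalence relations of $\equiv_T$ with union $\equiv_T$, and pulling back along $\pi$, the relations $E_n = (\pi \times \pi)^{-1}(F_n)$ are Borel, increasing, have union $E$, and — because any infinite subset of $[x]_E$ drawn from the $g_i(x)$ would, after applying $\pi$, give an infinite recursively-generated subset of $[\pi(x)]_{\equiv_T}$ inside an $F_n$-class — each $E_n$ is $(g_i)$-finite.

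The main obstacle is the coding step: one must enlarge $\pi(x)$ to carry, for each $i$ and each $j$, a bit saying whether $g_i(x)(j) \in [x]_E$ together with a name for that element, and do so \emph{uniformly and Turing-computably from $\pi(x)$}, while keeping $\pi$ a Borel reduction of $E$ to $\equiv_T$ (so the extra coding must be $\equiv_T$-invariant on $[x]_E$, which is where one uses that $E$ is countable and that one can code a transversal's worth of information, e.g. via a uniformly-chosen enumeration of each class). This is exactly the ``self-strengthening'' phenomenon the authors flag after Question~\ref{hyp-rec-fin}: the reduction must be engineered so that the single fixed sequence of Turing functionals $\Phi_i$ suffices to recover \emph{all} the potential infinity-witnesses $g_i$, not just the ones named by some Turing functional. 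Once the reduction is set up correctly, the pullback argument of the previous paragraph is routine; verifying that $E_n$ is an equivalence relation (it is a subequivalence relation of $E$ since $F_n \subset {\equiv_T}$ and $\pi$ is a reduction), that $\bigunion_n E_n = E$, and that $(g_i)$-finiteness transfers, are all short checks.
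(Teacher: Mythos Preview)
Your overall strategy matches the paper's: build an injective Borel homomorphism $h$ from $E$ into $\equiv_T$ so that the images of the $g_i$ under $h$ become uniformly computable from $h(x)$, then pull the witnesses $F_n$ back. But your implementation is more complicated than necessary and the coding step, which you yourself flag as ``the main obstacle'', is not actually carried out.

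First, you do not need $\pi$ to be a reduction. A homomorphism suffices: define $x \mathrel{E_n} y \iff x \mathrel{E} y \land \pi(x) \mathrel{F_n} \pi(y)$. Intersecting with $E$ guarantees each $E_n$ is a subequivalence relation of $E$, and since $\pi$ is a homomorphism the union is still $E$. The paper does exactly this, so the worry about $\pi$ landing on a $\equiv_T$-invariant set is irrelevant.

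Second, your description of how to build $\pi$ has a genuine gap. You say the coded data must be ``$\equiv_T$-invariant on $[x]_E$'' and propose using ``a transversal's worth of information, e.g.\ via a uniformly-chosen enumeration of each class''. But a general countable Borel $E$ need not admit a Borel transversal, and a Lusin--Novikov enumeration of $[x]_E$ depends on the basepoint $x$, so the extra coding is not obviously Turing-invariant across the class. One can try to also code all the reindexing permutations, but you have not done this, and it is exactly the delicate point.

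The paper sidesteps all of this. Reduce to $X = \cantor$ with $E$ and $(g_i)$ uniformly $\Delta^1_1$ (relativize otherwise); pick a computable ordinal $\alpha$ so that $x^{(\alpha)} \geq_T y$ for every $y \mathrel{E} x$ and $x^{(\alpha)} \geq_T \bigoplus_i g_i(x)$; set $\beta = \omega \cdot \alpha$; and take $h(x) = x^{(\beta)}$. Then $x \mathrel{E} y$ implies $x^{(\beta)} \equiv_T y^{(\beta)}$, the map is injective, and the sequence $j \mapsto (g_i(x)(j))^{(\beta)}$ is uniformly recursive in $x^{(\beta)}$. The pulled-back relations $E_k$ defined by $x \mathrel{E_k} y \iff x \mathrel{E} y \land x^{(\beta)} \mathrel{F_k} y^{(\beta)}$ then work immediately: an infinite $(g_i)$-witness in some $E_k$ pushes forward to an infinite uniformly recursive witness in $F_k$, contradicting hyper-recursive-finiteness. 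So the ``engineered reduction'' you describe is replaced by a single iterated jump, which automatically makes all the Borel data computable and automatically yields a homomorphism---no hand-built coding or transversal is needed.
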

\begin{proof} 
(1) is a special case of (2), and is hence implied by it. We prove that (1)
implies (2). Fix a witness
  $F_0 \subset F_1 \subset \ldots$  that $\equiv_T$ is
  hyper-recursively-finite. We wish to show that every countable Borel
  equivalence relation $E$ is hyper-Borel-finite.
  We may assume that $E$ is a countable Borel equivalence relation on
  $\cantor$. We may further suppose that $E$ is $\Delta^1_1$ and $(f_i)_{i
  \in \omega}$ is uniformly $\Delta^1_1$; our proof relativizes. 

  Since $E$ is a $\Delta^1_1$ relation with countable vertical sections,
  and $(f_i)_{i \in \omega}$ is uniformly $\Delta^1_1$, there is some
  computable ordinal notation $\alpha$ such that for all $x \in \cantor$
  and for all $y \E x$, $x^{(\alpha)} \geq_T y$ , and $x^{(\alpha)} \geq_T
  \bigoplus_{i \in \omega} f_i(x)$. Now if we let $\beta = \omega \cdot
  \alpha$, then $x^{(\alpha)} \geq_T y$ implies $x^{(\beta)} \geq_T
  y^{(\beta)}$. Hence, if $x \E y$, then $x^{(\beta)} \equiv_T
  y^{(\beta)}$, and $x^{(\beta)} \geq_T \left(\bigoplus_{i \in \omega}
  f_i(x) \right)^{(\beta)}$. Note that the function $x \mapsto x^{(\beta)}$
  is injective.
  
  Define $E_k$ by \[x \mathrel{E_k} y \iff x E y \land x^{(\beta)} F_k
  y^{(\beta)}.\] We claim that $(E_k)_{k \in \omega}$ witness that $E$
  is hyper-$(f_i)$-finite. Suppose not. Then there exists $E_k$, $x$ and
  $i$ such that $\{f_i(x)(j) \colon j \in \omega\}$ is infinite and $x
  \mathrel{E_k} f_i(x)(j)$ for all $j \in \omega$. This implies $x^{(\beta)} \mathrel{F_k}
  (f_i(x)(j))^{(\beta)}$ for all $j$ by definition of $E_k$. Now the sequence
  $\left((f_i(x)(j))^{(\beta)} \right)_{j \in \omega}$ is uniformly
  recursive in $x^{(\beta)}$ since 
  $x^{(\beta)} \geq_T (f_i(x))^{(\beta)}$. The set
  $\left\{(f_i(x)(j))^{(\beta)} \colon j \in \omega \right\}$ is still
  infinite since the jump operator $x \mapsto x^{(\beta)}$ is injective. This contradicts
  that $(F_k)_{k \in \omega}$ is a witness that $\equiv_T$ is
  hyper-recursively-finite.
  \end{proof}

The key in the above proof is that given any countable Borel equivalence
$E$ on $X$ and Borel functions $(f_i)$ from $X \to X^\omega$, we can find
an injective Borel
homomorphism $h$ from $E$ to $\equiv_T$ so that the image of each $f_i$
under $h$ is a computable function. Similar theorems to
Theorem~\ref{thm:hrf_equivalence} are true for other weakly universal
countable Borel equivalence relations, and collections of ``universal''
functions with respect to them. For example, let $E_\infty$ be the
orbit equivalence relation of the
shift action of the free group $\F_{\omega} = \<\gamma_{i,j}\>_{i,j \in \omega}$ on
$\omega^{\F_\omega}$ (so we are
indexing the generators of $\F_{\omega}$ by elements of $\omega^2$). 
Let 
$f_i(x)(j) = (\gamma_{i,j} \cdot x)$. Then $E_\infty$ is
hyper-$(f_i)$-finite if and only if every countable Borel equivalence
relation is hyper-Borel-finite. 

Boykin and Jackson have introduced the class of Borel bounded
equivalence relations \cite{BJ}. For these equivalence relations it is an open
problem
whether there is some non-hyperfinite Borel bounded equivalence
relation, and also whether all Borel equivalence relations are Borel
bounded. Similarly both these problems are open for the hyper-Borel-finite Borel equivalence relations. We pose the question of whether there
is a relationship between $E$ being
hyper-Borel-finite and being Borel bounded.

\begin{question}
  Is every Borel bounded countable Borel equivalence relation hyper-Borel-finite?
\end{question}
 
Straightforward measure theoretic and Baire category arguments
cannot prove that any countable Borel equivalence relation is not
hyper-Borel-finite. This follows for Baire category from generic
hyperfiniteness. 
To analyze hyper-Borel-finiteness in the measure theoretic setting, we
first need an easy lemma about functions selecting subsets of a finite set. 
Below, $\Prob(X)$ indicates the probability of an event $X$. 

\begin{lemma}\label{prob_pigeon}
  Suppose $(X,\mu)$ is a standard probability space, $k \leq n$, $Y$ is a
  finite set where $|Y| = n$,
  and $g
  \from X \to [Y]^k$ is any measurable function associating
  to each $x \in X$ a subset of $Y$ of size $k$.
  Then for any $m \geq 1$, there is a set $S \subset Y$ with $|S| \leq m$ such
  that $\Prob(g(x) \inters S \neq \emptyset) \geq 1 - \left(1 - k/n\right)^m$.
\end{lemma}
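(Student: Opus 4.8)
The plan is to prove this by the probabilistic method, choosing $S$ at random. Fix $m \in \N$ and let $y_1, \dots, y_m$ be independent, uniformly distributed elements of $Y$; put $S = \{y_1, \dots, y_m\}$, so that $|S| \leq m$ always. For a fixed $x \in X$, since $|g(x)| = k$ and $|Y| = n$, each $y_\ell$ lies outside $g(x)$ with probability $1 - k/n$, and these $m$ events are independent, so the probability that $g(x) \inters S = \emptyset$ (that is, that every $y_\ell$ misses $g(x)$) is exactly $(1 - k/n)^m$.

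Next I would average this over $x$. For a fixed value of $S$, the set $B_S = \{x \in X : g(x) \inters S = \emptyset\}$ is measurable, since $g$ is measurable and $B_S = g^{-1}(\{A \in [Y]^k : A \inters S = \emptyset\})$ is the $g$-preimage of a subset of the finite set $[Y]^k$. Applying Fubini's theorem to the product of $(X,\mu)$ with the finite probability space indexing the choices of $(y_1, \dots, y_m)$, we get
\[
\mathbb{E}_S[\mu(B_S)] = \int_X \Prob(g(x) \inters S = \emptyset)\, d\mu(x) = \int_X (1 - k/n)^m\, d\mu(x) = (1 - k/n)^m .
\]

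Since the expected value of $\mu(B_S)$ over the random choice of $S$ is $(1 - k/n)^m$, there is at least one choice of $S$ with $\mu(B_S) \leq (1 - k/n)^m$; this $S$ satisfies $|S| \leq m$ and $\Prob(g(x) \inters S \neq \emptyset) = 1 - \mu(B_S) \geq 1 - (1 - k/n)^m$, which is what we want.

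I do not anticipate a real obstacle: this is a routine first-moment/averaging computation. The only mild subtleties are that sampling with replacement yields $|S| \leq m$ rather than $|S| = m$ (harmless, and in fact it only decreases $\Prob(g(x) \inters S \neq \emptyset)$, so the stated bound is conservative), and the bookkeeping needed to see that $B_S$ is measurable so that Fubini applies.
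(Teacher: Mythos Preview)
Your proof is correct and is essentially the same probabilistic-method argument the paper gives: sample $m$ elements of $Y$ independently with replacement, compute that the expected $\mu$-measure of $\{x : g(x)\cap S=\emptyset\}$ is $(1-k/n)^m$, and conclude that some particular $S$ achieves this bound. You are just slightly more explicit about the Fubini/measurability bookkeeping than the paper is.
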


The point of the lemma for us is the case where $0 \ll k \ll n$, and $m = \lceil
\frac{n}{\sqrt{k}}
\rceil$. Think of $g$ as being a probabilistic process for choosing $k$
elements out of our set $Y$ of size $n$. Then the lemma says 
we can choose $S \subset Y$ of size $|S| \leq m$ such that $\Prob(S
\inters g(x) \neq \emptyset)$ is close to $1$. That is, we can find a
``small'' $S$ (of size much less than $|Y| = n$)
so that with very high probability, one of the $k$ elements we
choose using the process $g$ comes from $S$.
This is because $(1 -
k/n)^{n/k} \approx 1/e$, so $(1 - k/n)^{m} \approx
(1/e)^{\sqrt{k}} \approx 0$. 

\begin{proof}
  If we select $i$ from $Y$ uniformly at random, and $x$ from $X$ at
  random (wrt $\mu$),
  then $\Prob(i \notin g(x)) = 1-k/n$, since $g(x)$ has $k$
  elements. So if we pick $m$ elements $i_1, \ldots, i_{m}$ from $Y$ 
  uniformly at random (allowing repetitions in the list), and let $S = \{i_1, \ldots, i_{m}\}$, then $\Prob(S
  \inters g(x) = \emptyset) = \left(1 - k/n \right)^{m}$. Hence,
  there must be some fixed set $S = \{i_1, \ldots, i_m\}$ such that
  $\Prob(g(x) \inters S = \emptyset) \leq \left(1 - k/n \right)^{m}$,
  and so \mbox{$\Prob(g(x) \inters S \neq \emptyset) \geq 1 - \left(1 - k/n
  \right)^{m}$}. (It is possible that $|S| < m$ if we have repetitions). 
  %If this $S^*$ has fewer than $m$ elements (since $i_{j} =
  %i_l$ for some $j,l$), then we can arbitrarily add elements to $S^*$ to
  %make it have size $m$, and this probability only increases.
\end{proof}

We now have the following theorem analyzing hyper-Borel-finiteness in the
measure theoretic setting:
\begin{thm}
  Suppose $E$ is a countable Borel equivalence relation on a standard Borel
  space $X$, $(f_i)_{i \in \omega}$ are Borel functions from $X$ to
  $X^\omega$, and $\mu$ is a Borel probability measure on $X$. Then there
  is a $\mu$-conull Borel set $B$ so that $E \restriction B$ is
  hyper-$(f_i)$-finite. 
\end{thm}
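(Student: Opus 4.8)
\emph{Plan.} I would first make the routine reductions. Splitting $X$ according to whether the $E$-class is finite or infinite, and noting that on the finite part $E$ itself is $(f_i)$-finite (the traces $\{f_i(x)(j):j\}$ are then finite) and so trivially hyper-$(f_i)$-finite, I may assume $E$ is aperiodic; I may also replace each $f_i(x)(j)$ by $x$ whenever $f_i(x)(j)\notin[x]_E$, since this changes none of the relevant conditions, and (harmlessly) I may assume $\mu$ is $E$-quasi-invariant and that all data is $\Delta^1_1$, the argument relativizing.

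\emph{Reduction to a rigid form.} I would try to realize the witnessing sequence in the very rigid shape ``$E$ with a marker set deleted, plus singletons.'' Precisely, I look for a decreasing sequence of Borel sets $X=Z_0\supseteq Z_1\supseteq\cdots$ with $\mu(\bigcap_n Z_n)=0$ such that, whenever $\{f_i(x)(j):j\in\omega\}$ is infinite, every $Z_n$ meets $\{x\}\cup\{f_i(x)(j):j\in\omega\}$. Given such a sequence, put $B:=X\setminus\bigcap_n Z_n$ and $F_n:=\{(x,y)\in E: x=y \text{ or } (x\notin Z_n \text{ and } y\notin Z_n)\}$. Then the $F_n\restriction B$ are Borel subequivalence relations of $E\restriction B$; they increase because $Z_n\supseteq Z_{n+1}$; their union is $E\restriction B$ because $\bigcap_n Z_n$ is deleted from $B$ and $(Z_n)$ is decreasing; and each $F_n$ is $(f_i)$-finite, since a non-singleton $F_n$-class is $[x]_E\setminus Z_n$ with $x\notin Z_n$, and for any $y$ in it $Z_n$ meets $\{y\}\cup\{f_i(y)(j):j\}$ while $y\notin Z_n$, so $Z_n$ contains one of the $f_i(y)(j)$, i.e. the class omits it. So the theorem is reduced to building $(Z_n)$, a hitting-set problem tailor-made for Lemma~\ref{prob_pigeon}.

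\emph{Constructing $(Z_n)$.} I would build $Z_n'$ one scale at a time and then set $Z_n:=\bigcup_{m\ge n}Z_m'$. Fix rapidly growing parameters with $1\ll k_n\ll N_n$ and $L_n\to\infty$. By the Slaman--Steel marker lemma, tile $X$ by a Borel finite subequivalence relation of $E$ into ``windows'' $W$ of size $\approx N_n$. For a window $W$, an index $i<L_n$, and $y\in W$, let $g_{n,i}(y)\in[W]^{\le k_n}$ be the first $k_n$ distinct values of $f_i(y)$ lying in $W$. Apply Lemma~\ref{prob_pigeon} inside each window, with $\mu\restriction W$ and in the regime $m=\lceil N_n/\sqrt{k_n}\rceil\ll N_n$, together with a Markov-type argument on the random choice of $S$ to control its $\mu$-mass as well: this yields $S_W\subseteq W$ with $\mu(S_W)=O\big((L_n/\sqrt{k_n})\,\mu(W)\big)$ meeting $g_{n,i}(y)$ for every $i<L_n$ and all but an $O(L_n e^{-\sqrt{k_n}})$-fraction of $y\in W$. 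Let $B_n$ collect the remaining ``bad'' $y$ (including those whose relevant trace is infinite but has fewer than $k_n$ values in its window), choose parameters so that $\mu\big(\bigcup_W S_W\big)+\mu(B_n)\le 2^{-n}$, and set $Z_n':=\bigcup_W S_W\cup B_n$. Then $Z_n:=\bigcup_{m\ge n}Z_m'$ is decreasing with $\mu(\bigcap_n Z_n)=\mu(\limsup_m Z_m')=0$, and it meets every infinite extended trace, because each $Z_m'$ meets all extended $f_i$-traces for $i<L_m$ and $Z_n$ absorbs $Z_m'$ for all $m\ge n$.

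\emph{Main obstacle.} The difficult case is an $f_i$-trace that is spread thinly across its $E$-class rather than concentrated in any one window: then the window-local set $g_{n,i}(y)$ is too small for the pigeon lemma to bite, and simply throwing all such $y$ into $B_n$ can cost a non-negligible amount of $\mu$-mass. Overcoming this is the technical heart of the proof; I expect it must be handled either by letting the pigeon-lemma application ``see'' the parts of each trace lying in neighboring windows (choosing the hitting sets per window against \emph{all} traces passing through it, not just those starting there), or by iterating the construction so that a trace escaping one scale's windows is captured at a later scale. It is exactly here that the quantitative content of Lemma~\ref{prob_pigeon} --- that the hitting set can be taken of size $o(N_n)$ so that $\mu(Z_n)\to 0$ --- is essential.
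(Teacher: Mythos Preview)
Your framework matches the paper's: the witnessing subequivalence relations are $E$ restricted to the complements of a decreasing sequence of small sets, and one reduces via a Borel--Cantelli argument to handling a single $f$ and a single $\epsilon>0$. The divergence is entirely in how the hitting sets are built, and the ``main obstacle'' you flag is a genuine gap rather than a technicality. Applying Lemma~\ref{prob_pigeon} inside finite marker windows of the $E$-class requires each infinite trace to place at least $k_n$ points in some single window, but there is no a priori reason for this: the trace may be spread so thinly across $[x]_E$ that no window at scale $n$ sees $k_n$ points of it, and your bad set $B_n$ then has no reason to be $\mu$-small. Neither of your suggested fixes (let $S_W$ target all traces passing through $W$, or iterate over scales) comes with an estimate on $\mu(B_n)$, and you have not indicated one; the difficulty is that you are fighting the combinatorics of $E$, which the statement does not constrain at all.

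The paper sidesteps this by abandoning the $E$-class structure entirely. After identifying $X$ with $2^\omega$, it applies Lemma~\ref{prob_pigeon} a single time, with $Y=\{N_s:s\in 2^l\}$ the set of level-$l$ basic open neighborhoods and $g(x)$ the first $k$ such neighborhoods met by $\{f(x)(j):j\}$. The key observation is purely topological: any infinite subset of $2^\omega$ meets at least $k$ level-$l$ neighborhoods once $l$ is large enough, so the set $X_{l,k}$ of $x$ whose trace meets at least $k$ level-$l$ neighborhoods has $\mu(X_{l,k})>1-\epsilon$ for suitable $l$, regardless of how the trace sits inside $[x]_E$. The lemma then yields a family $S$ of fewer than $2^l/\sqrt{k}$ neighborhoods hitting almost every trace, with $\mu(\bigcup S)$ small by the same Markov-type averaging you invoke, and $A=\{x\in X_{l,k}\setminus\bigcup S:\exists j\,f(x)(j)\in\bigcup S\}$ is the desired set. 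Replacing marker windows by topological neighborhoods is exactly the missing idea.
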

\begin{proof}
  We claim that for any $\epsilon > 0$, and any single Borel
  function $f \from X \to X^\omega$, there is a Borel set $A \subset X$
  with $\mu(A) > 1 - \epsilon$ such that $E \restriction A$ is
  $f$-finite. (By $f$-finite for a single $f$, we mean that no $E
  \restriction A$-class contains an infinite set of the form $\{f(x)(j)
  \colon j \in \omega\}$).

  The theorem follows easily from this claim. Choose a sequence of positive
  real numbers $(a_{i,n})_{i,n \in \omega}$ so that $\sum_{i,n} a_{i,n} < \infty$.
  Then for each $i$ and $n$, let $A_{i,n} \subset X$ be a Borel set so that
  $E \restriction A_{i,n}$ is $f_i$-finite (just for the single
  function $f_i$), and $\mu(A_{i,n}) > 1 - a_{i,n}$. Then let $B_m =
  \biginters_{n \geq m \land i \in \omega} A_{i,n}$. Since $B_m \subset
  A_{i,m}$ for every $i$, $E \restriction B_m$ is $(f_i)_{i \in
  \omega}$-finite (for the entire sequence of $(f_i)_{i \in \omega}$). The
  $B_m$ are increasing sets. We have $\mu(B_m) > 1 - \sum_{n \geq m \land i \in
  \omega} a_{i,n}$, so $\mu(B_m) \to 1$. Let $A = \bigunion_m B_m$. Then $E
  \restriction A$ is hyper-$(f_i)$-finite as witnessed by $E \restriction
  B_m$.

  We prove the claim.  
  Fix a Borel function $f \from X \to X^\omega$.
  Without loss of generality we may assume that $\{f(x)(j) \colon j \in
  \omega\}$ is infinite for every $x$, and that $\mu$ has no atoms. The idea here is to use Lemma~\ref{prob_pigeon} to
  find a set $A$ of measure $\mu(A) > 1 - \epsilon$ such that for every $x \in
  A$, there is some $j$ such that $f(x)(j) \notin A$.

  We may assume by the isomorphism theorem for standard probability spaces
  that $X = 2^\omega$ and $\mu$ is Lebesgue measure. Consider the function $U_l(x) = \{N_s \colon s \in
  2^l \land (\exists j) f(x)(j) \in N_s \}$. That is, $U_l(x)$ is the
  collection of basic open neighborhoods $N_s$, where $s$ has length $l$,
  such that $N_s$ contains some element of the sequence $f(x)$. Since the neighborhoods $N_s$
  separate points, for every $x$ we have $|U_l(x)| \to \infty$ as $l \to
  \infty$. Letting $X_{l,k} = \{x \in X \colon 
  |U_l(x)| \geq k\}$,
  we may choose a sufficiently large $l$ so that $\mu(X_{l,k}) > 1 - \epsilon$.

  Now by picking $l \gg k \gg 0$ sufficiently large
  and applying Lemma~\ref{prob_pigeon} to
  the function selecting the least $k$ elements of $U_{l}(x)$, 
  we can choose a set $S \subset \{N_s \colon s \in
  2^l\}$ of size $|S| <  2^l/\sqrt{k}$  so that
  $\mu( \{x \in X_{l,k} \colon U_l(x)
  \inters S \neq \emptyset\})$ is arbitrarily close to $\mu(X_{l,k})$.
  Note that $\mu(\bigunion S) < \frac{1}{\sqrt{k}}$.
  Let 
  \[A = \{x \in X_{l,k} \setminus \bigunion S \colon \exists i f(x)(i) \in \bigunion
  S\}\]
  The claim follows.
\end{proof}

The above proof is trivial in the sense that the subequivalence relations
witnessing hyper-$(f_i)$-finiteness are simply the original equivalence
relation restricted to some Borel subset of $X$. This style of witness that
an equivalence relation is hyper-Borel-finite cannot work in general to
show that an equivalence relation is hyper-Borel-finite. For example, there
is no increasing sequence of Borel sets $(A_k)_{k \in
\omega}$ such that $2^\omega = \bigunion_k A_k$, and the equivalence
relations $\equiv_T \restriction
A_k$ witness that $\equiv_T$ is hyper-recursively-finite. To see this,
note that some $A_n$ must contain a pointed perfect set, and hence
$\equiv_T \restriction A_n$ must contain a uniformly computable infinite sequence.

%We finish this section by discussing hyper-Borel-finiteness and
%compressibility. Let $I_\omega$ be the equivalence relation on $\omega$
%where all elements are equivalent. Suppose $E$ is a countable Borel
%equivalence relation on $X$. Then let $E \times I_\omega$ be the countable
%Borel equivalence relation on $X \times \omega$ where $(x,y) \mathrel{E
%\times I_\omega} (x',y')$ if $x \mathrel{E} x'$ and $y \mathrel{I_\omega}
%y'$. Then $E \times I_\omega$ is a countable Borel equivalence
%relation, and $E \sim_B E \times I_\omega$. One advantage of $E \times
%I_\omega$, however, is that it is generated by the action of a finitely
%generated group. Suppose $a \from \Gamma \actson X$ is an action of a
%countable group $\Gamma$ generating $E$, and let $\gamma_0, \gamma_1,
%\ldots$ be an enunmeration of $\Gamma$. Then $E \times I_\omega$ is
%generated by an action of the free group $\F_2 = \<a,b\>$ on two
%generators. Let $\rho \from \omega \to \omega$ be a bijection with a
%single orbit (i.e. for all $n, m \in \omega$ there exists some $k \in Z$ so
%that $\rho^k(n) = m)$. Then we can define an action of $\F_2$ on $X \times
%\omega$ by $a \cdot (x,n) = (\gamma_n \cdot x, n)$, and $b \cdot (x,n) =
%(x,\rho(n))$. Clearly this action generates $E \times I_\omega$. Now we
%note the following:
%\begin{lemma}
%  $E$ is hyper-Borel-finite if and only if $E \times I_\omega$ is hyper-Borel-finite.
%\end{lemma}
%\begin{proof}
%$\Leftarrow$ is trivial since $E$ is a subequivalence relation of $E \times
%I_\omega$. 
%$\Rightarrow$ follows since 
%
%\end{proof}

\section{Strengthenings of the Kuratowski-Mycielski theorem}

Two often used constructions in computability theory are 
\begin{enumerate}
\item There is a Borel function $f \from \cantor \to
\cantor$ so that if $x_0, \ldots, x_n$ are distinct, then 
$f(x_0), \ldots f(x_n)$ are mutually $1$-generic.
\item There is a Borel function $f \from \cantor \to \cantor$ so that for
all $x$, $f(x)$ is $x$-generic.
\end{enumerate}
(1) is true since there is a perfect tree whose infinite paths are mutual $1$-generics (hence $f$ in (1)
may be continuous). (2) is true since
$x'$ can compute an $x$-generic real uniformly, and so $f$ in this case may
be Baire class $1$ (i.e. $\bfSigma^0_2$-measurable).

It is impossible 
to have a function
$f$ with both properties (1) and (2):

\begin{prop}
  There is no Borel function $f \from \cantor \to \cantor$ so that:
  \begin{enumerate}
    \item If $x_0, \ldots, x_n$ are distinct, then $f(x_0), \ldots f(x_n)$ are
    mutually $1$-generic.
    \item For all $x$, $f(x)$ is $x$-generic.
  \end{enumerate}
\end{prop}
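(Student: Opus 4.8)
The plan is to derive a contradiction by counting genericity requirements along a suitable perfect set of inputs. Suppose such an $f$ exists. By the Kuratowski--Mycielski theorem (or directly, since the set of tuples $(x_0,\dots,x_n)$ with $f(x_0),\dots,f(x_n)$ pairwise distinct is comeager in the space of $n+1$-tuples), we may find a Cantor set $C \subset \cantor$ such that $f$ is injective on $C$ and, more importantly, $f \restriction C$ is continuous and the images $f(x)$ for $x \in C$ remain mutually $1$-generic for distinct inputs. Fix some $z \in C$ and consider any $x \in C$ with $x \neq z$. By property (2), $f(z)$ is $z$-generic, and by property (1), $f(x)$ and $f(z)$ are mutually $1$-generic; I want to play these two facts against each other by choosing $x$ so that $z$ computes enough information to build a dense set that $f(z)$ must meet, thereby violating the $1$-genericity of $f(z)$ over a set coded by information that $x$ (and hence $f(x)$, via mutual genericity with something) can see.

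First I would make the tension precise. Since $f \restriction C$ is continuous, $f(x)$ depends continuously on $x \in C$; parametrize $C \cong \cantor$ and think of $f$ as a continuous map $g \from \cantor \to \cantor$ whose values on distinct points are mutually $1$-generic, and with $g(x)$ being $x$-generic (after absorbing the homeomorphism $C \cong \cantor$ into the oracle — one must be a little careful here, since $x$-genericity is not a homeomorphism-invariant notion, so I would instead keep track of the actual reals in $C$ and use that membership in $C$ and the inverse homeomorphism are Borel, hence computable from some fixed real $d$ which I can throw into all oracles). The key step is: pick two distinct points $x_0, x_1 \in C$ that agree on a long initial segment, so that $g(x_0)$ and $g(x_1)$ agree on a long initial segment (by continuity), yet are supposed to be mutually $1$-generic, which for distinct reals is fine — so instead the real contradiction should come from property (2). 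Here is the cleaner route: since $g(x_0)$ is $x_0$-generic and $g(x_1)$ is $x_1$-generic, and $x_0, x_1 \in C$ can be chosen with $x_1 \leq_T x_0 \oplus d$ in a controlled way, I would arrange that $g(x_1) \leq_T g(x_0) \oplus d$ fails to be mutually generic — no. Let me restate the actual mechanism below.

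The genuine argument, and the step I expect to be the main obstacle to get exactly right, is this. Use property (2) to get, for a fixed $x \in C$, that $f(x)$ is $x$-generic — in particular $f(x) \not\leq_T x$ and more. Now vary a second coordinate: for distinct $x, y \in C$, $f(x)$ and $f(y)$ are mutually $1$-generic, so neither computes the other and in fact $f(y)$ is $1$-generic relative to $f(x)$. The contradiction: fix $x$, and consider the map $y \mapsto f(y)$ on $C \setminus \{x\}$. Each $f(y)$ is $x$-generic would be too strong to hope for, but I can instead do the following diagonalization. Build, using $x$ as an oracle, a perfect subtree and pick $y \in C$ with $y \equiv_T x \oplus (\text{some real } x \text{ can guess})$ — the point being that since $C$ has a perfect computable-in-$d$ subtree, there are continuum many $y \in C$, and for a generic such $y$ we have $f(y)$ being $d \oplus x$-generic is false because... the cleanest contradiction is simply: by (1), $f(x)$ is $1$-generic relative to $f(y)$ for every $y \neq x$ in $C$; but there are continuum many such $y$, and by (2) each $f(y)$ is $y$-generic. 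Take $y$ with $f(y) \geq_T$ a fast-growing function dominating the modulus needed to meet a dense set $D \leq_T f(x)$ — then $f(x)$ meets $D$, which is automatic, so that is not it either. I will go with the coding approach: using that $f\restriction C$ is continuous and injective, for each $x\in C$ the fiber information lets $x \oplus d$ compute a name for $f(x)$; pick $x_0 \neq x_1$ in $C$ with $x_0 \equiv_T x_1$ relative to $d$ chosen so that from $x_0 \oplus d$ one computes $x_1$, hence computes $f(x_1)$; then $f(x_1) \leq_T x_0 \oplus d$, contradicting that $f(x_1)$ is $1$-generic relative to $f(x_0)$ once we also arrange $x_0 \oplus d \leq_T f(x_0) \oplus d$ — but that last reduction is exactly what (1)+(2) should forbid, so the honest obstacle is showing one can choose such $x_0,x_1$ inside $C$; this is where I would invoke a Kuratowski--Mycielski-style argument (a strengthening of the kind the next section is about) to get a pointed perfect set structure on $C$ making $x_0 \oplus d$ compute other members of $C$. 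Modulo that pointedness, the contradiction is immediate, so the main obstacle is extracting the right kind of perfect set from the hypotheses — which, given the section title, is presumably supplied by the tools the authors develop next.
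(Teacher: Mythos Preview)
Your proposal is not a proof; it is a sequence of attempted approaches, none of which you bring to a conclusion, and you say as much at the end. Let me name the concrete gaps.

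First, restricting to a perfect set $C$ on which $f$ is continuous buys you nothing for condition~(2): as you already note, $x$-genericity is not invariant under the homeomorphism $C \cong \cantor$, so after passing to $C$ you have lost the only leverage that (2) gave you over the original reals. Second, in your ``coding approach'' you need $x_0 \oplus d \leq_T f(x_0) \oplus d$ (so that $f(x_0)$ computes $x_1$ and hence $f(x_1)$), but there is no reason whatsoever for $f(x_0)$ to compute $x_0$; indeed a $1$-generic real computes nothing noncomputable below it, and nothing in the hypotheses produces a pointed perfect set inside $\ran(f)$ or inside $C$. You defer this to ``the tools the authors develop next,'' but those tools (Lemma~4.2) construct functions with controlled genericity properties; they do not extract pointed perfect sets from an arbitrary Borel $f$ satisfying (1) and (2).

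The paper's proof is entirely different and takes four lines. Condition~(2) forces $\ran(f)$ to be nonmeager: if it were meager, its complement would contain a dense $G_\delta$ set $A$ coded by some real $z$, but then $f(z)$ is $z$-generic and hence lies in $A$, contradicting $f(z) \in \ran(f)$. Since $\ran(f)$ is analytic it has the Baire property, so it is comeager in some basic open $N_s$. Any comeager subset of $N_s$ contains two distinct reals that are equal mod finite, and two such reals are never mutually $1$-generic, contradicting~(1). No perfect sets, no continuity restrictions, no computability-theoretic coding --- just Baire category applied to $\ran(f)$.
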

\begin{proof}
  If (2) holds, then $\ran(f)$ is nonmeager. This is true
  because if $\ran(f)$ is meager, the complement of $\ran(f)$ is comeager and hence it
  would contain a dense
  $G_\delta$ set $A$ which is coded by some real $z$. But since $f(z)$
  is $z$-generic, $f(z) \in A$, and so $f(z) \notin \ran(f)$. 

  Now $\ran(f)$ is $\bfSigma^1_1$ and so it has the Baire property. Since $\ran(f)$
  is nonmeager, it is therefore comeager in some basic open set $N_s$. But this
  implies that $\ran(f)$ contains two elements $f(x_0) \neq f(x_1)$ which
  are equal mod finite and hence are not mutually $1$-generic.
\end{proof}

The point of this section is to prove 
Lemma~\ref{genericity_lemma}  where we make (1) above
compatible with a weakening of (2). 
Instead of $f(x)$ being
$x$-generic, we can make $f(x)$ and $x$ a minimal pair in some suitable sense. 
We will need such functions to prove part (3) of Theorem~\ref{m_construction}.
The reader who is not interested in that part of the Theorem may skip this
section, and simply use a continuous map to a set of mutual $1$-generics instead
of the functions constructed in this section. 

\begin{lemma}\label{genericity_lemma}
  Suppose $E$ is a countable Borel equivalence relation on $\cantor$, 
  and $(g_i)_{i \in \omega}$ is a countable collection of Borel functions from
  $\cantor$ to $\cantor$ so $g_i(x)$ is incomputable for all $x$. Then there is a
  Borel function $f \from \cantor \to \cantor$ such that
  \begin{enumerate}
    \item[(1)] If $x_0, \ldots, x_n$ are distinct, then $f(x_0), \ldots f(x_n)$ are
    mutually $1$-generic.
    \item[(2)] For all $x,y \in \cantor$ such that $x \mathrel{E} y$, there
    is no $z$ and $i$ so that $z \leq_m g_i(x)$ and $z \leq_m f(y)$ via a
    many-one reduction with infinite range.
  \end{enumerate}
\end{lemma}

Note that since $f(y)$ is $1$-generic, if $z \leq_m f(y)$ via a
many-one reduction with infinite range, then $z$ is not computable. 

This lemma follows easily from the more general
Lemma~\ref{general_genericity_lemma} below:
\begin{proof}[Proof of Lemma~\ref{genericity_lemma}:]
  Apply Lemma~\ref{general_genericity_lemma} where $Y = Z = \cantor$,
  $C_n \subset (\cantor)^n$ is the set of mutually $1$-generic $n$-tuples, 
  $y \mathrel{S_1} z$ if $y \geq_m z$ via a many-one reduction with infinite
  range, and $x \mathrel{R} z$ if $g_i(x) \geq_m z$ for some $i$. 
  Note that if $\rho \from \omega \to \omega$ is a many-one reduction with
  infinite range, then there is no $z \in \cantor$ such that $y \geq_m z$ via $\rho$
  for nonmeagerly many $y$, since the set of $y$ such that there exists an
  $n$ such that $n \in z \iff \rho(n) \notin y$ is open and dense. 
\end{proof}

We now prove the following strengthening of the 
Kuratowski-Mycielski theorem \cite[Theorem 19.1]{K95}. 
Say that a relation $R \subset X \times Y$ has \define{countable vertical
sections} if for all $x \in X$ there are countably many $y \in Y$ such that
$x \mathrel{R} y$.

\begin{lemma}\label{general_genericity_lemma}
  Suppose $E$ is a countable Borel equivalence relation on a Polish space $X$. Let $Y,Z$ be
  Polish spaces and $R \subset X \times Z$ and $S_n \subset Y^n \times Z$ 
  be Borel relations with countable vertical sections.
  Then for any collection
  $(C_n)_{n \in \omega}$
  of comeager sets $C_n \subset Y^n$, there is a Borel injection $f \from X
  \to Y$ such that 
  \begin{enumerate}
    \item For all $x_1, \ldots, x_n \in
    X$, $(f(x_1), \ldots, f(x_n)) \in C_n$.
    \item For all $x \in X$ and distinct $x_1, \ldots, x_n \in [x]_E$, if $x
    \mathrel{R} z$
    and \mbox{$(f(x_1), \ldots, f(x_n)) \mathrel{S_n} z$}, then there is a nonmeager
    set of $\vec{y} \in Y^n$ such that $\vec{y} \mathrel{S_n} z$. 
  \end{enumerate}
\end{lemma}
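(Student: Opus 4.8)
The plan is to build $f$ as a continuous function coded by a single real via a fusion/forcing construction, following the proof of the Kuratowski–Mycielski theorem but adding the extra genericity requirement (2). First I would use that $E$ is a countable Borel equivalence relation to fix a Borel $\Z$-action, or more conveniently a countable group $\Gamma = \{\gamma_k\}_{k \in \omega}$ of Borel automorphisms of $X$ generating $E$ (Feldman–Moore). The idea is to think of $f$ as determined by a Cantor scheme $(U_s)_{s \in 2^{<\omega}}$ of nonempty open subsets of $Y$ with $\overline{U_{s^\frown i}} \subset U_s$ and $\diam(U_s) \to 0$, together with a continuous injection $x \mapsto \sigma_x \in 2^\omega$ (built in parallel, say $\sigma_x$ is a sufficiently generic/fast-converging copy of $x$ itself), and then $f(x) = $ the unique point of $\bigcap_n U_{\sigma_x \restriction n}$. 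To get injectivity and clause (1) it suffices to arrange: distinct $s,t$ of the same length have $\overline{U_s} \cap \overline{U_t} = \emptyset$; and for each $n$ and each finite injective tuple $(s_1,\dots,s_n)$ of nodes of the same length, the open box $U_{s_1} \times \cdots \times U_{s_n}$ is contained in a predetermined dense open subset of $C_n$ — this is exactly the standard Mycielski diagonalization, done one requirement at a time while shrinking the scheme.

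The new ingredient is clause (2), which I would handle as an additional family of requirements interleaved into the same fusion. The requirements are indexed by tuples $(e, n, k_1, \dots, k_n)$ where $e$ codes a potential reduction and the $\gamma_{k_i}$ pick out the relevant elements of $[x]_E$; the point is to ensure that for the "generic" point $f(x)$, no real $z$ can be simultaneously computed (in the sense of $S_n$) from $(f(x_1),\dots,f(x_n))$ and be related by $R$ to $x$ unless $z$ lies in the nonmeager set promised in (2). Here is where I would invoke the hypothesis that $R$ and the $S_n$ have countable vertical sections: for a fixed $x$, the set $\{z : x \mathrel{R} z\}$ is countable, so I can enumerate the potential "bad" $z$'s and, since each such $z$ generates (via $S_n^{-1}$) only countably many tuples $\vec y$, the set $\{\vec y : \vec y \mathrel{S_n} z\}$ is either nonmeager (in which case (2) is satisfied for free) or meager — and in the meager case I diagonalize the scheme so that $(f(x_1),\dots,f(x_n))$ avoids a comeager superset of its complement, killing that $z$. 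The usual Kuratowski–Mycielski bookkeeping (working with a countable dense set of "conditions" — finite approximations to $(U_s)$ together with finite approximations to $\sigma_x$ on a finite subset of $X$, or rather on basic open pieces of $X$) makes this a routine fusion once the requirement list is set up; $f$ is then Borel (indeed continuous) because it is coded by the real built in the fusion, and the countable-vertical-section hypotheses guarantee that only countably many requirements are "active" at each $x$, so they can all be met.

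I would organize the write-up as: (a) reduce to $X = Y = Z = 2^\omega$ by Borel isomorphism and fix the Feldman–Moore generators; (b) set up conditions and the fusion sequence; (c) list the three families of requirements (separation/injectivity, membership in $C_n$, genericity relative to $(R,S_n)$) and show each is "dense" in the relevant sense, i.e. can be met by shrinking a condition; (d) verify that the fusion limit yields the desired $f$, checking each clause. The main obstacle I anticipate is clause~(d) for requirement~(2): one must be careful that diagonalizing against a particular bad $z$ for a particular tuple $(x_1,\dots,x_n)$ does not inadvertently ruin the construction for nearby points $x'$ — this is why the construction is phrased in terms of open conditions rather than pointwise, and why the countable-vertical-section hypothesis is essential, as it bounds how many $z$'s (hence how many diagonalization tasks) any single point can generate. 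A secondary subtlety is making the map $x \mapsto \sigma_x$ Borel and injective while keeping it compatible with the scheme-shrinking; the cleanest fix is to let $\sigma_x$ be a fixed Borel injection of $X$ into $2^\omega$ with a fast modulus, chosen at the start, and only shrink the $U_s$'s during the fusion. Everything else is the standard Kuratowski–Mycielski argument as in \cite[Theorem 19.1]{K95}.
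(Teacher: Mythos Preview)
Your plan has a real gap in the handling of clause~(2). You propose to fix the address map $\sigma\colon X\to 2^\omega$ in advance and then only shrink the Cantor scheme $(U_s)$. But then at any finite stage the assignment $x\mapsto U_{\sigma_x\restriction s}$ depends only on the clopen piece $\sigma^{-1}(N_s)$ that $x$ lies in, while the bad $z$'s you must avoid at $x$ --- namely the points $g_j(x)$ uniformizing $\{z: x\mathrel{R}z\}$ --- vary \emph{Borel}-measurably with $x$ and are not constant on such pieces. Concretely, the requirement indexed by $(j,n,k,k_1,\dots,k_n)$ asks that $h_{n,k}\bigl(f(\gamma_{k_1}x),\dots,f(\gamma_{k_n}x)\bigr)\neq g_j(x)$ for every $x$; after shrinking, the left side is forced into some fixed open $W\subset Z$, but you have no control over whether $g_j(x)\in W$ for the uncountably many $x$ in that piece. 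Enumerating the bad $z$'s ``at each $x$'' does not help, because the scheme does not act on individual $x$'s. (Your remark that the countable-vertical-section hypothesis keeps only countably many requirements active at each $x$ is correct, but the construction is not pointwise.)

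The paper's fix is exactly to abandon the fixed $\sigma$ and replace Cantor-scheme conditions by functions $p\colon P\to\mathcal{B}_Y$ where $P$ is a finite \emph{Borel} partition of $X$. At the key step it refines the partition using the Borel set $\phi(A)=\{x: g_j(\gamma_i^{-1}x)\in W'_A\}$, where $W'_A,W''_A$ are disjoint open sets separating two possible values of $h_{n,k}$; points with $g_j$-value in $W'_A$ are sent to a $U''$ forcing $h_{n,k}$ into $W''_A$, and vice versa. This splitting on a Borel preimage of $g_j$ is precisely what a scheme indexed by a pre-chosen $\sigma$ cannot do. If you rework your conditions so that the domain pieces may be arbitrary Borel sets refined during the fusion (in effect building $\sigma$ dynamically from the $g_j$'s), your outline converges to the paper's argument.
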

Roughly this says that there is a Borel function $f$ so that any finitely
many elements of $\ran(f)$ are ``mutually
generic'' (i.e. in $C_n$), and that if $x_1, \ldots, x_n \in [x]_E$, then $x$ and $(f(x_1),
\ldots, f(x_n))$ form a ``minimal pair'' (with respect to $R$ and $S_n$).
\begin{proof}
  Fix countable bases $\mathcal{B}_X,\mathcal{B}_Y,\mathcal{B}_Z$ of $X, Y$, and $Z$.
  Also fix a complete
  metric $d$ generating the topology of $Y$. Say that an
  \define{approximation} $p$ of $f$ 
  is a function $p \from P \to \mathcal{B}_Y$ where $P$ is a Borel partition of $X$ into finitely many Borel sets.
  Say that
  an approximation $p' \from P' \to \mathcal{B}_Y$ refines
  $p \from P \to \mathcal{B}_Y$ if $P'$ refines
  $P$, and if $A' \in P'$ and $A \in P$ are
  such that $A' \subset A$, then $p'(A') \subset p(A)$. 

  Suppose that $p_0, p_1, \ldots$ is a sequence of approximations where 
  $p_{n+1}$ refines $p_{n}$, 
  \begin{enumerate}
  \item[(a)] $\max \{\diam(U) \colon U
  \in \ran(p_n)\} \to 0$ as $n \to \infty$, and
  \item[(b)] for all $n$, there exists $m > n$, so that $A \in \dom(p_n)$, $A'
  \in \dom(p_m)$ and $A' \subset A$ implies $\cl(p_m(A')) \subset p_n(A)$,
  where $\cl$ denotes closure.
  \end{enumerate}
  Then we can associate to
  this sequence the function $f \from X \to Y$ where $f(x) = y$ if $\{y\} =
  \biginters_n p_n(A_{x,n})$ where
  $A_{x,n}$
  is the unique element of $\dom(p_n)$ such that $x \in A_{x,n}$. Conditions
  (a) and (b) ensure that $\biginters_n p_n(A_{x,n})$ is 
  a singleton for every
  $x$.
  We will
  construct $f$ in this way, where the sequence $(p_i)_{i \in \omega}$ is a sufficiently generic
  sequence of approximations. Clearly 
  (1) in the
  statement of the Lemma
  will be true for a sufficiently generic sequence. We give a density argument to
  justify why (2) will be true.

  Since $R$, $S_n$ have countable vertical sections, by Lusin-Novikov
  uniformization \cite[18.5]{K95}, there are Borel functions
  $(g_i)_{i \in \omega}$ and $(h_{n,i})_{i,n \in \omega}$ where $g_i
  \from X \to Z$ and $h_{n,i} \from Y^n \to Z$ such that $x \mathrel{R} z$
  if and only if $g_i(x) = z$ for some $i$, and $\vec{y} \mathrel{S_n} z$ iff
  $h_{n,i}(\vec{y}) = z$ for some $i$.
  By perhaps refining the sets
  $C_n$, we may assume that the functions $h_{n,i}$ are continuous on
  $C_n$, since any Borel function is continuous on a comeager set
  \cite[Theorem 8.38]{K95}.
  By the Feldman-Moore theorem, we may fix a Borel action of a countable
  group $\Gamma$ generating $E$.
  Let $\mathcal{G}$ be the set of $z \in Z$ such that for some $n$, there
  is a nonmeager set of $\vec{y} \in Y^n$ such that $\vec{y} \mathrel{S_n}
  z$.

  Fix an approximation $p$, finitely many disjoint basic open sets
  $V_{1}, \ldots, V_{n} \subset X$ and group elements $\gamma_1,
  \ldots, \gamma_n \in \Gamma$, and $j, k \in \omega$. It suffices to show
  that we can refine $p$ to an approximation $p^*$ such that for all $x \in
  X$, if $\gamma_i \cdot x \in V_{i}$ for all $i \leq n$, then
  either 
  \[\tag{*} (h_{n,k} \restriction C_n)(p^*([\gamma_1 \cdot x]) \times \ldots \times
  p^*([\gamma_n \cdot x])) \subset \mathcal{G}, \text{ or}\]
  \[\tag{**} 
  g_j(x) \notin (h_{n,k} \restriction C_n)(p^*([\gamma_1 \cdot x]) \times \ldots \times
  p^*([\gamma_n \cdot x]))\]
  where by $[\gamma_i \cdot x]$ we mean the element of $\dom(p^*)$ that
  contains $\gamma_i \cdot x$. That is, the condition above is that if
  $\gamma_i \cdot x \in V_i$ for all $i \leq n$, then the value of $h_{n,k}(f(\gamma_1 \cdot x), \ldots, f(\gamma_n
  \cdot x))$ is
  ``forced" by $p^*$ to be in $\mathcal{G}$, or forced to be different from $g_j(x)$.

  Let $B = \{x \colon (\forall i \leq n) \gamma_i \cdot x \in V_i\}$. These
  are the $x$ for which we must ensure that either (*) or (**) holds. 
  Let $P = \dom(p)$. By refining the domain of $p$, we may assume that
  every element of $P$ is
  either contained in or disjoint from 
  $\gamma_i \cdot B$ for every $i \leq n$. By similarly refining the
  domain, we may furthermore assume that if $A \in P$ is such that $A \subset \gamma_i \cdot
  B$, then $\gamma_{i'} \gamma_{i}^{-1} \cdot A \in P$ for all $i' \leq n$.

  We now define $p^*$. For all $A \in P$ such that $A \nsubseteq \gamma_i
  \cdot B$ for all $i \leq n$, put $A \in \dom(p^*)$, and define $p^*(A) =
  p(A)$. Any remaining $A \in P$ belongs to a tuple 
  $(A_1, \ldots, A_n)$ of elements of $P$ where 
  $A_i \subset \gamma_i \cdot B$ for all $i \leq n$ and $A_{i'} =
  \gamma_{i'} \cdot \gamma_{i}^{-1} \cdot A_i$ for all $i, i' \leq n$ (by
  our assumption on $P$ from the previous paragraph). So for all $x$, if $\gamma_i
  \cdot x \in A_i$ for some $i \leq n$, then $\gamma_i \cdot x \in A_i$ for all
  $i \leq n$. We will define $p^*$ on these $A_i$ to satisfy (*) or (**).
  Letting $U_i = p(A_i)$ for every $i \leq n$, we ask if there are basic
  open sets $U_{i}', U_{i}'' \subset U_{i}$ and disjoint basic open sets
  $W', W'' \subset Z$ so that $(h_{n,k} \restriction C_n)(U_{1}', \ldots,
  U_{n}') \subset W'$ and $(h_{n,k} \restriction C_n)(U_1'', \ldots,
  U_{n}'') \subset W''$.

  Case 1: if such $W'$ and $W''$ do not exist, then put $A_i \in \dom(p^*)$ and define $p^*(A_i) = p(A_i) =
  U_{i}$ for every $i \leq n$. Since $h_{n,k} \restriction C_n$
  is continuous, then $(h_{n,k} \restriction C_n)(U_{1}, \ldots, U_{n})$
  must be a singleton, which must therefore be in $\mathcal{G}$. So in this
  case (*) is satisfied for all $x$ such that $\gamma_i \cdot x \in A_i$
  for $i \leq n$.

  Case 2: if such $W'$ and $W''$ do exist, let
  $A_i' = \{x \colon g_j(\gamma_i^{-1} \cdot x) \in W'\}$, and for every $i
  \leq n$, put both $A_i'$ and $A_i \setminus A_i'$ in $\dom(p^*)$, and
  define $p^*(A_i') = U_{i}''$, and $p^*(A_i \setminus A_i') = U_{i}'$.
  Then by definition, (**) holds for every $x$ such that $\gamma_i \cdot x
  \in A_i$ for $i \leq n$. 
\end{proof}

We remark that there are interesting 
open problems about the extent to which the
Kuratowski-Mycielski theorem can be generalized. For example,
\begin{question}
  Does there exist a Borel function $f \from \cantor \to \cantor$, so that
  for all distinct $x, y$ with $x \leq_T y$, $f(x)$ and $f(y)$ are mutually $x$-generic? 
\end{question}

\section{A nonuniform construction}

In our main construction in the proof of Theorem~\ref{m_construction}, we
will do coding using countably many computable injections
$c_m \from \omega \to \omega$ with disjoint ranges. Precisely, we will ensure that if $x
\mathrel{E} y$, then $f(x) \leq_1 f(y)$ via one of these one-one reductions $c_m$. We will want this coding method to be very ``generic'' so that each $c_m$ is computable, but the sequence $(c_m)_{m \in \omega}$ is not uniformly computable, and for example, if $A \subset \bigunion_m \ran(c_m)$ is c.e., then $A \inters \ran(c_m)$ is infinite for some single $c_m$ (so there is no c.e.\ way of picking finitely many elements of $\ran(c_m)$ for each $m$). 

We begin this section with some definitions and lemmas related to the kind of
coding we will do. The reader may want to read the first few paragraphs of the
proof of Theorem~\ref{m_construction} up to the definition of $f$ and
verification of (1) to motivate these definitions.

\begin{defn}\label{defn:coding_decoding}
Suppose $(c_m)_{m \in \omega}$ is a sequence of injections $c_m \colon \omega \to \omega$ with disjoint ranges such that $c_m(n) > n$ for all $n$. 
We define the \define{decoding function} $d \from \omega \to \omega^{<
\omega}$ associated to $(c_m)_{m \in \omega}$ as
follows: 
\[d(n) = \begin{cases}
\emptyset & \text{ if $n \notin \ran(c_m)$ for any $m$}\\
m \concat d(c_m^{-1}(n)) & \text{ if $n \in \ran(c_m)$}
\end{cases}
\]
Where $\emptyset$ denotes the empty string and $\concat$ denotes 
concatenation of strings. 
Similarly, define $d_s \from \omega \to \omega^{< \omega}$ in the same way but where we only use $c_m$ where
$m \leq s$.
\[d_s(n) = \begin{cases}
\emptyset & \text{ if $n \notin \ran(c_m)$ for any $m \leq s$}\\
m \concat d_s(c_m^{-1}(n)) & \text{ if $n \in \ran(c_m)$ and $m \leq s$}
\end{cases}
\]
Finally, define $b, b_s \from \omega \to \omega$ as follows.
Define $b(n) = (c_{m_0} \circ \ldots \circ c_{m_k})^{-1}(n)$
where $m_0, \ldots, m_k$ are such that $d(n) = (m_0, \ldots, m_k)$.
Similarly, 
$b_s(n) = (c_{m_0} \circ \ldots \circ c_{m_k})^{-1}(n)$
where $m_0, \ldots, m_k$ are such that $d_s(n) = (m_0, \ldots, m_k)$. We use the
convention that the empty composition is the identity. So in particular if
$d(n)=\emptyset$, then $b(n)=n$. 
\end{defn}

We can think of $d$ in the following way. Any $n \in \omega$
can be in the range of at most one $c_m$ since the $(c_m)_{m \in \omega}$
have disjoint ranges. If $n$ is in the range of some $c_m$, the number
$c_m^{-1}(n)$ is strictly less than $n$. 
Iterating this process, there is a unique longest sequence $m_0, \ldots,
m_k$ so that $n \in \ran(c_{m_0}\circ \ldots \circ c_{m_k})$. This longest
such sequence $(m_0, \ldots, m_k)$ is defined to be $d(n)$. The function
$d_s$ is defined the same way but where we restrict to only considering
$c_m$ with $m \leq s$. Finally, $b$ and $b_s$ are the functions which map $n$ to the
number obtained by repeatedly taking the inverse image of $n$ under $c_{m_0},
\ldots, c_{m_k}$ where $(m_0, \ldots, m_k)$ is either $d(n)$ or $d_s(n)$
respectively. 
Note that $d_s(n)$ is an initial segment of $d(n)$, and in fact $d(n) =
d_s(n) \concat d(b_s(n))$ for every $n, s$. 
We also have that $\ran(b_s)$ is the complement of $\ran(c_0) \union \ldots \union \ran(c_s)$, and $\ran(b)$ is the complement of $\bigunion_{m \in \omega} \ran(c_m)$.

We now describe the functions $(c_m)_{m \in \omega}$ we will use in the
proof of Theorem~\ref{m_construction}. Below if $t \in \omega^{< \omega}$
is a sequence, then $\max t$ denotes the largest number in the sequence
$t$. We take the convention that $\max \emptyset = 0$. Recall that $A^{[i]}
= \{\<i,j\> \in A \colon j \in \omega\}$ is the $i$th column of $A$. 

\begin{lemma}
\label{lem:coding_lemma}
  There is a sequence $(c_m)_{m \in \omega}$ of injective 
  computable functions $c_m \from \omega \to \omega$ with disjoint ranges so that $c_m(n) > n$ for all $n,m$, and an 
  infinite computable set $D_0 \subset \omega$ so that $D_0$
  is disjoint from $\bigunion_{m \in \omega}\ran(c_m)$, and 
\begin{enumerate}
  \item For all computable $\rho \from \omega \to \omega$, there exists an $s \in \omega$ so that either $b_s (\rho(\omega))$
  is finite, or there exists a computable infinite set $B$ so that for all
  $n \in B$, $\max
  d(\rho(n)) \leq s$, and $b_s(\rho(B))$ is infinite. 

  \item For all computable $\rho \from \omega \to \omega$, there exists an $s \in \omega$ so that either for infinitely many
  $i$, $b_s(\rho(\omega^{[i]}))$ is finite, or there is a computable set $B$ 
  so that for all $n \in B$, $\max d(\rho(n)) \leq s$, and for
  all but finitely many $i$, $b_s(\rho(B^{[i]}))$ is infinite.
  \end{enumerate}
\end{lemma}
\begin{proof}
Suppose $\rho \from \omega \to \omega$ is computable and $\rho' \from 
\omega \to \omega$ is defined by $\rho'(\<i,j\>) = \rho(j)$, so $\rho'$
copies the values of $\rho$ on every
column of $\omega$. Then if (2) holds for $\rho'$ then (1) holds for
$\rho$. So we only need to verify property (2).

We construct the sequence $(c_m)_{m \in \omega}$ in countably many stages.
We will also
build a sequence $(D_m)_{m \in \omega}$ of subsets of $\omega$ where for
all $m$, $D_{m} \supset D_{m-1}$, $D_m$ is disjoint from $\ran(c_0) \union \ldots \union \ran(c_m)$, and
$\ran(c_0) \union \ldots \union \ran(c_m) \union D_m$ is coinfinite. Though
each $c_m$ and $D_m$ will be computable, neither the sequence $(c_m)_{m \in
\omega}$ nor $(D_m)_{m \in \omega}$ will be uniformly computable.

Note also that since $c_m(n)> n$ for all $n$, if $c_m$ is computable, then  
$\ran(c_m)$ is computable, and $c_m^{-1}$ is computable since $k \in \ran(c_m)$ if there exists some $n < k$ such that $c_m(n) = k$. Hence for each $s$, the functions $d_s$ and $b_s$
will be computable.

Let $c_0$ be any computable function such that $c_0(n) > n$ for all $n$, and $D_0 \subset
\omega$ be any computable infinite set so that $D_0$ and $\ran(c_0)$ are
disjoint and $D_0 \union \ran(c_0)$ is coinfinite. Since we will ensure
that $\ran(c_m)$ is disjoint from $D_m \supset D_0$ for every $m$, the
required property that $D_0$ will be disjoint from $\union_{m} \ran(c_m)$
will be true at the end of the construction. 

At stage $s$, let $\rho \colon \omega \to \omega$ be the $s$th total
computable function, and suppose we have defined $c_s$ and $D_s$. We will
define $c_{s+1}$ and $D_{s+1}$ so that (2) is true. We may
assume that there is some $k$ so that for all $i \geq k$,
$b_s(\rho(\omega^{[i]}))$ is infinite. If this is not the case, then property
(2) is already true for $\rho$ no matter how we define the remaining $c_s$ and so we may define $D_{s+1} = D_s$, and let
$c_{s+1}$ be an arbitrary computable injection so that $c_{s+1}(n) > n$ for all $n$, 
$\ran(c_{s+1})$ is disjoint from $\ran(c_0) \union
\ldots \union \ran(c_s) \union D_{s+1}$ and so that $\ran(c_0) \union
\ldots \union \ran(c_{s+1}) \union D_{s+1}$ is coinfinite.

So fix $k$ so that for all $i \geq k$, $b_s(\rho(\omega^{[i]}))$ is
infinite. Now we can find a computable set $D_{s+1} \supset D_s$ so that
for every $i \geq k$, $D_{s+1} \inters b_s(\rho(\omega^{[i]}))$ is infinite
and $D_{s+1}$ is disjoint from $\ran(c_0) \union \ldots \union \ran(c_s)$. This is just by the standard fact that if $(A_i)_{i \in \omega}$ is a uniformly c.e. family of infinite sets, then there is a computable infinite-coinfinite set $D$ so that $D \inters A_i$ is infinite for each $i$. More precisely, we will define $D_{s+1}$ so that $D_{s+1} \setminus D_s \subset \ran(b_s)$. At each step, 
define $D_{s+1} \setminus D_s$ on a large enough finite segment to ensure
that there are at least $n$ elements of $b_s(\rho(\omega^{[i]}))$ in
$D_{s+1}$ for every $k \leq i \leq n$. At step $n$ we also choose $n$ new elements not in
$D_{s} \union \ran(c_0) \union \ldots \union \ran(c_s)$ and promise that
they will not be in $D_{s+1}$ (so that at the end of the construction
$D_{s+1} \union \ran(c_0) \union \ldots \union \ran(c_s)$ is coinfinite). 

Once we have defined $D_{s+1}$ as above, no matter how we finish the construction, we will have 
the property that for every $n$ such that $b_s(\rho(n)) \in D_{s+1}$, $\max d(\rho(n))
\leq s$. This is since $d(\rho(n)) =
d_s(\rho(n)) \concat d(b_s(\rho(n))) = d_s(\rho(n))$ since
$b_s(\rho(n))$ is not in the range of any $c_m$ since it is in
$D_{s+1}$. By definition of $d_s$, we have $\max d_s(m) \leq s$ for all
$m$. Finally, the set $B = \{n \colon b_s(\rho(n)) \in D_{s+1}\}$ is
computable (since $D_{s+1}$ and $b_s$ are computable) and is our desired
computable set. To finish, we define $c_{s+1}$ to 
be an arbitrary computable injection so that $c_{s+1}(n) > n$ for all $n$, 
$\ran(c_{s+1})$ is disjoint from $\ran(c_0) \union
\ldots \union \ran(c_s) \union D_{s+1}$ and so that $\ran(c_0) \union
\ldots \union \ran(c_{s+1}) \union D_{s+1}$ is coinfinite.
\end{proof}

Of course, the range $\rho(\omega)$ of a computable function $\rho \from
\omega \to \omega$ is just a c.e.\ set, and we could equivalently state
Lemma~\ref{lem:coding_lemma} to be about c.e.\ sets instead. 
For example, 
part (2) of Lemma~\ref{lem:coding_lemma} would become:
if $(A_i)_{i \in \omega}$ is a
uniformly c.e. family of subsets of $\omega$, then either for infinitely
many $i$, $b_s(A_i)$ is
finite, or there is a computable set $C \subset \bigunion_i A_i$ so that $\max d(n) \leq s$ for all
$n \in C$ and for all but finitely many $i$, $b_s(A_i \inters C)$ is
infinite. Here $(A_i)_{i \in \omega}$ is
$(\rho(\omega^{[i]}))_{i \in \omega}$, and $B$ in the above lemma would be
$\rho^{-1}(C)$. We stated the Lemma~\ref{lem:coding_lemma} in the above form since this is
the way it will eventually be used, where $\rho$ is some many-one reduction.

Two important ideals in the proof of Theorem~\ref{m_construction}
will be the first and second iterated Fr\'echet
ideals on $\omega$ which we denote $I_1$ and $I_2$.  
We use $I_2$ when we are simultaneously analyzing all the
columns of a many-one reduction. 
\begin{defn}
Let $I_1 = \{A \subset \omega \colon A \text{ is finite}\}$.
Let $I_2 = \{A \subset \omega \colon$ for all but finitely many $i$,
$A^{[i]}$ is finite$\}$. 
\end{defn}

An important idea in our proof of Theorem~\ref{m_construction} is captured
by the following simple proposition. 
One should think here of a set not being in an ideal $I$ on $\omega$
as a notion of largeness. For example for the
Fr\'echet ideal $I_1$, $A \notin I_1$ if and only if $A$ is infinite.
\begin{prop}\label{prop:arithmetic_tree}
  Suppose $S \subset \omega^{< \omega}$ is a finitely branching tree, $t
  \from \omega \to S$ is an arithmetic function, and $I$ is an
  arithmetically definable ideal on $\omega$ (such as $I_1$ or $I_2$). Let $T \subset S$
  be defined by $T = \{s \in S \colon \{n \colon t(n) \supset
  s\} \notin I\}$. Then $T$ is an arithmetically definable subtree of $S$. Furthermore any $s \in T$ with no extensions in $T$ has $\{n
  \colon t(n) = s\} \notin I$. So by K{\H o}nig's lemma, either $T$ has an infinite branch
  and hence an arithmetically definable infinite branch, or there is some
  $s$ so that $\{n \colon t(n) = s\} \notin I$.
\end{prop}
\begin{proof}
  First we show $T$ is closed downward and is hence a tree. Suppose $s_1
  \in T$, and $s_0 \subset s_1$. Then since $\{n \colon t(n) \supset s_0\}
  \supset \{n \colon t(n) \supset s_1\}$ and any superset of a set not in
  $I$ is also not in $I$, we have $s_0 \in T$. 

  Now if $s \in T$, and $s_0, \ldots, s_k$ are the immediate extensions of
  $s$ in $S$, then we can partition the set $\{n \colon t(n) \supset s\}$ which is not in $I$ into
  finitely many sets: $\{n \colon t(n) = s\}$, and $\{n \colon t(n) \supset
  s_i\}$ for each $i \leq k$. 
  At least one of these sets must not be
  in $I$ since a union of finitely many sets in $I$ is in $I$. Hence,
  any $s \in T$ with no extensions in $T$ has $\{n \colon t(n) = s\} \notin
  I$.
\end{proof}

In the proof of Theorem~\ref{m_construction} we will 
use the same idea as the above proposition, but in a relativized
form, and where $t$ is a function to a finitely branching tree in a different
space (a tree made of elements of $[x]_E^{<\omega}$).

We are ready to prove our main theorem showing that a positive answer to
Question~\ref{hyp-rec-fin} implies the existence of nonuniform invariant
functions that are incomparable with the identity function.

\begin{thm}\label{m_construction}
  Suppose $E$ is a hyper-Borel-finite Borel equivalence relation on
  $\cantor$, and $(f_i)_{i \in \omega}$ is a countable collection of Borel functions from $\cantor$ to $\cantor$, so $f_i(x)$ is incomputable for every $i$ and $x$.  
  Then there exists an injective
  Borel function $f \from \cantor \to \cantor$ such that for all $x_0, x_1
  \in \cantor$ 
  \begin{enumerate}
  \item If $x_0 \mathrel{E} x_1$, then $f(x_0) \equiv_1 f(x_1)$
  \item If $x_0 \mathrel{\cancel{E}} x_1$, then $f(x_0) \not \equiv_m f(x_1)$.
  \item For every $x \in \cantor$ and $i \in \omega$, $f(x)$ is $\leq_m$-incomparable with $f_i(x)$. 
  \item For all $x \in 2^\omega$, there does not exist an infinite sequence
  $(x_i)_{i \in \omega}$ of distinct reals such that
  $\bigoplus_i f(x_i) \leq_m f(x)$.
  \end{enumerate}
\end{thm}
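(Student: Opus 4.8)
The plan is to put $E$ in a convenient form, extract from the hypothesis a filtration that simultaneously defeats all potential counterexamples to~(4), use Lemma~\ref{genericity_lemma} to manufacture the genericity needed for~(2) and~(3), and then code the $E$-class of $x$ into $f(x)$ in a way that is ``locally uniform'' (so that each class collapses to a single $\equiv_1$-degree) but ``globally non-uniform'' (so that no single $m$-reduction recovers an infinite piece of the class).

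First I would reduce to the case where $E$ is the orbit equivalence relation of a Borel action of $\F_\omega = \langle\gamma_n\rangle_{n\in\omega}$, fix a computable enumeration of $\F_\omega$ and computable group operations, and (as in the proof of Theorem~\ref{thm:hrf_equivalence}) assume all objects are $\Delta^1_1$, relativizing in the usual way. For each computable sequence $(w_i)_{i\in\omega}$ in $\F_\omega$ let $\theta_{(w_i)}\from X\to X^\omega$ be $\theta_{(w_i)}(x)(i)=w_i\cdot x$, and apply hyper-Borel-finiteness of $E$ to the countable family of all such $\theta_{(w_i)}$ to obtain an increasing sequence $F_0\subset F_1\subset\cdots$ of Borel subequivalence relations of $E$ with $\bigunion_n F_n=E$ and each $F_n$ being $(\theta_{(w_i)})$-finite. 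Writing $S^x_n=\{w\in\F_\omega: w\cdot x\mathrel{F_n} x\}$ and $L_x(w)=\min\{n:w\in S^x_n\}$ (finite since $\bigunion_n F_n=E$), this says exactly that each $S^x_n$ is immune: no computable function has infinite range contained in $S^x_n$, and in particular each $L_x$ is non-computable. Separately, apply Lemma~\ref{genericity_lemma} to $E$ to fix a Borel injection $g\from\cantor\to\cantor$ all of whose distinct values are mutually $1$-generic and such that, for all $x\E y$ and all computable $B\subset\omega$, $g(x)$ and $y\restriction B$ form a minimal pair.

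Now I would define $f(x)$ to code the $E$-class of $x$ ``rooted at $x$'': index the columns of $f(x)$ by $\F_\omega$, with the $w$-th column carrying $g(w\cdot x)$ ``obscured at depth $L_x(w)$'', the simplest candidate being the column $0^{L_x(w)}\frown g(w\cdot x)$ (but finding a scheme that actually works is the heart of the matter, see below). Since $L_x(e)=0$ the $e$-th column is $g(x)$, so $f$ is an injective Borel function with $f(x)\geq_T g(x)$, hence $f(x)$ is noncomputable for noncomputable $x$. For~(1): if $x_1=\delta\cdot x_0$, re-rooting is the column map $w\mapsto w\delta$; because $x_0\mathrel{F_d} x_1$ with $d=\min\{n:x_0\mathrel{F_n} x_1\}$, the obscuring depths of corresponding columns differ by at most the fixed number $d$, so the induced permutation is computable and $f(x_0)\equiv_1 f(x_1)$. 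For~(3) I would prove the key sub-lemma that $f$ inherits the minimal-pair property: for all $x\E y$, computable $B$ and $z$, if $z\leq_T f(x)$ and $z\leq_T y\restriction B$ then $z$ is computable. Granting this, $f(x)\geq_m x$ or $f(x)\geq_m\overline x$ forces $x\leq_T f(x)$ (using $\overline x\equiv_T x$), hence $x$ computable; and $f(x)\leq_m x$ forces $f(x)\leq_T x$, hence $f(x)$ computable, contradicting $f(x)\geq_T g(x)$; so $f(x)$ is $\leq_m$-incomparable with $x$ and $\overline x$. For~(2): if $x_0\mathrel{\cancel{E}} x_1$ and $f(x_0)\equiv_m f(x_1)$ then $f(x_0)\geq_T g(x_1)$, but $f(x_0)$ is built only from the $g$-values on $[x_0]_E$ and $g(x_1)$ is $1$-generic relative to those in the strong Kuratowski--Mycielski sense of Lemma~\ref{general_genericity_lemma}, so $g(x_1)\not\leq_T f(x_0)$, a contradiction.

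The crux is~(4), together with the design of the obscuring scheme on which it rests. Suppose $f(x)\geq_m\bigoplus_i f(x_i)$ via an $m$-reduction $\Phi$, with the $x_i$ distinct in $[x]_E$; write $x_i=\delta_i\cdot x$. Because the $g$-values on $[x]_E$ are mutually $1$-generic, the information $g(w\cdot x_i)$ occurs in $f(x)$ only inside the single column $w\delta_i$, so $\Phi$ must, uniformly in $i$, route $f(x_i)$ onto $f(x)$ by right translation by $\delta_i$; reading $\delta_i$ off the $i$-th component of $\Phi$ shows $i\mapsto\delta_i$ is computable. Then, since each $F_n$ is $(\theta_{(\delta_i)})$-finite and $\{x_i\}$ is infinite, $\{\delta_i\}$ is not contained in any $S^x_n$, so $L_x(\delta_i)\to\infty$. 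But $\Phi$ must simultaneously reconcile obscuring depths: the $w$-th column of $f(x_i)$ is obscured at depth $L_{x_i}(w)$ while the corresponding column $w\delta_i$ of $f(x)$ is obscured at depth $L_x(w\delta_i)$, and (by the ultrametric inequality $d_F(x,z')\le\max(d_F(x,z),d_F(z,z'))$ for the $F_n$-filtration) the discrepancy $L_x(w\delta_i)-L_{x_i}(w)$ vanishes off the immune set $S^x_{d_i}\delta_i^{-1}$, where $d_i=\min\{n:x\mathrel{F_n} x_i\}$, yet is nonzero and unbounded there as $i\to\infty$. Making precise that no single $m$-reduction can effect this family of depth-corrections --- although each correction individually could --- is where immunity of the $S^x_n$ is genuinely used, and is the main obstacle, together with the dual constraint that the obscuring scheme must keep re-rooting to a single target computable for~(1). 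I expect this reconciliation of~(1) and~(4), and to a lesser extent the minimal-pair sub-lemma for $f$, to be the hard parts; the other verifications follow from the genericity and minimal-pair properties already packaged in Lemma~\ref{genericity_lemma}.
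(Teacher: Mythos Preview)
Your high-level plan matches the paper's, but your verifications of (2) and (3) have a real gap, precisely the one you flag as the ``minimal-pair sub-lemma for $f$''. You want $g(x_1)\not\leq_T f(x_0)$ when $x_1\notin[x_0]_E$, and more generally that $f(x)$ and $y\restriction B$ form a minimal pair. But $f(x_0)$ encodes the \emph{infinite} join of the $g$-values on $[x_0]_E$, and Lemmas~\ref{genericity_lemma} and~\ref{general_genericity_lemma} only control what \emph{finite} tuples of $g$-values can compute; nothing in those lemmas prevents the full join of countably many mutually $1$-generic reals from computing an outside real or from sharing non-computable information with some $y\restriction B$. The paper proves no such infinite-join statement. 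Separately, your candidate scheme $0^{L_x(w)}\frown g(w\cdot x)$ does not give~(1) either: the per-column shift $L_{x_0}(w\delta)-L_{x_1}(w)$ is bounded by $d$, but it is a non-computable function of $w$, so no single computable permutation realizes the $1$-equivalence.

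The paper's way around both problems is to work with $\leq_m$ rather than $\leq_T$ throughout and to choose the coding so that any single $m$-reduction can be \emph{decoded}. Concretely, $f$ is defined self-referentially by $f(x)(c(\ell(x,\gamma_i\cdot x),i,m))=f(\gamma_i\cdot x)(m)$ and $f(x)(n)=g(x)(n)$ for $n$ outside the relevant part of $\ran(c)$, where $c$ is a fixed computable injection; because the $\ell$-value is an \emph{argument} of $c$ rather than a variable prefix length, the map $m\mapsto c(\ell,i,m)$ is computable and~(1) is immediate. The heart of the proof is then a Claim: if $z\leq_m f(x)$ via some $\rho_s$, then either $z$ is computable or there exist an infinite computable $B$ and a \emph{single} $y\in[x]_E$ with $z\restriction B\leq_m g(y)$. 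This is proved by tracing each $\rho_s(n)$ back through iterates of $c$ to the $g(y)$ it eventually names, with a four-case analysis in which the only case that does not terminate at a single $g(y)$ produces an infinite sequence in some $[x]_{E_s}$ that is arithmetically definable from the diagram of $x$, contradicting the choice of the filtration $(E_s)$. Once this Claim is available, (2) and (3) reduce directly to the finite-tuple conclusions of Lemma~\ref{genericity_lemma}, and~(4) is a column-uniform rerun of the same case analysis; your heuristic for~(4) is close in spirit to Case~4 of that analysis, but the other cases are where the genericity of $g$ is actually invoked.
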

\begin{proof}%[Proof of Theorem~\ref{m_construction}]
  Let $\F_\omega \actson 2^\omega$ be a Borel action of the group
  $\F_\omega$ that generates the equivalence relation $E$. Let
  $(\gamma_i)_{i \in \omega}$ be a computable enumeration of the group
  $\F_\omega$ so that group multiplication is computable.
  Let $h_i \from \cantor \to (\cantor)^\omega$ be the Borel function where
  $h_i(x) \in (\cantor)^{\omega}$ is the $i$th real arithmetically
  definable from $\bigoplus_{j \in \omega} \gamma_j \cdot x$ (using some
  computable bijection to identify 
  $\cantor$ with $(\cantor)^\omega$). Intuitively, 
  $\bigoplus_{j \in \omega} \gamma_j \cdot x$ codes the
  entire orbit of $x$ under the group action. For example, for every
  $x \in \cantor$, the
  stabilizer of $x$ (i.e. $\{i \colon \gamma_i \cdot x = x\}$) is
  arithmetically definable from $\bigoplus_{j \in \omega} \gamma_j \cdot x$.
  Since the
  function $x \mapsto \bigoplus_{j \in \omega} \gamma_j \cdot x$ is Borel,
  each $h_i$ is Borel since it is the composition of a Borel function with an
  arithmetic function. Let $(E_j)_{j \in \omega}$ be a witness
  that $E$ is hyper-$(h_i)_{i \in \omega}$-finite, so $E_0 \subset E_1
  \subset \ldots$, and $E = \bigunion_{j \in
  \omega} E_j$.

  Let $g \from \cantor \to
  \cantor$ be the function $f$ from Lemma~\ref{genericity_lemma},  
  so the range of $g$ is a set of mutual $1$-generic reals, and if $x \mathrel{E} y$ and $z \leq_m g(x)$ via a many-one reduction with infinite range, then $z \nleq_m f_i(y)$ for all $i \in \omega$.
  Let $(c_m)_{m \in \omega}$ be as in Lemma~\ref{lem:coding_lemma}.  
  We define $f \from \cantor \to \cantor$ by:
  \[f(x)(n) = \begin{cases} 
  f(\gamma_i \cdot x)(c_{\<i,j\>}^{-1}(n)) & \text{ if $\exists i, j$ so $n
  \in \ran(c_{\<i,j\>})$ and $x \mathrel{E_j} \gamma_i \cdot x$} \\
  g(x)(n) \text{ otherwise}.
  \end{cases} \]
  This definition is self-referential, but it is not circular. If $f(x)(n)
  = f(\gamma_{i_0} \cdot x)(n_0)$ where $n_0 = c^{-1}_{\<i_0,j_0\>}(n)$,
  then $n_0 < n$
  since $c_{\<i_0,j_0\>}(n) > n$ for all $n$ by Lemma~\ref{lem:coding_lemma}. 
  So after finitely many applications of
  the definition of $f$ we will reach the base case of the definition and
  find a sequence $i_0, \ldots, i_k$ and $n_k$ where
  $f(x)(n) = g(\gamma_{i_k} \cdots \gamma_{i_0} \cdot x)(n_k)$. These kinds
  of self-referential definitions where we code values of $f$ into itself have been used
  before in the study of the Borel complexity of equivalence relations
  from computability theory (see e.g. \cite[Theorem 2.5]{MSS} and
  \cite[Theorem 3.6]{M}). 

  By the definition of $f$, part (1) of the theorem is true. Given any
  $x_0 \mathrel{E} x_1$, let $i$ be such that $\gamma_i \cdot x_0 = x_1$.
  There is some $j$ such that $x_0 \mathrel{E_j} x_1$. Then
  the function $c_{\<i,j\>}$ is a one-one reduction witnessing $f(x_1)
  \leq_1 f(x_0)$. This is because for all $n_0 \in \omega$, $f(\gamma_i \cdot
  x_0)(n_0) = f(x_0)(c_{\<i,j\>}(n_0))$ by letting $n = c_{\<i,j\>}(n_0)$ in the
  definition of $f(x_0)$. Arguing symmetrically, we also have $f(x_0) \leq_1
  f(x_1)$. The function $f$ is injective since $f(x) \restriction D_0 = g(x) \restriction D_0$ and since the elements of $\ran(g)$ are mutually $1$-generic, we cannot have $g(x) \restriction D_0 = g(y) \restriction D_0$ for distinct $x, y \in \cantor$.

  The idea of the proof is that $f$ is as generic as possible, subject to the coding we must do to ensure that if $x_0 \mathrel{E} x_1$, then $f(x_0)
  \equiv_1 f(x_1)$. Intuitively, there are two types of bits $n$ of $f(x)$.
  There are infinitely many ``generic'' bits $n$ where $f(x)(n) = g(x)(n)$. The
  remaining bits are used for coding where we record the value of the bits of
  $f(\gamma_i \cdot x)$ for $i \in \omega$. This coding scheme is also chosen to
  be generic (as made precise by Lemma~\ref{lem:coding_lemma}), and we use 
  our hyper-$(h_i)_{i \in \omega}$-finiteness witness to choose at what stage of the
  construction to code $f(y)$ into $f(x)$ for $y$ such that $y \mathrel{E} x$. 
  Supposing $z \leq_m f(x)$, the crux of the proof is understanding how well
  this many-one reduction can iteratively decode this coding to find bits of
  $g(\gamma_i \cdot x)(n)$ for many different $i$ and $n$. The high-level idea
  is that if there is a many-one reduction whose range decodes to
  be values of $g(\gamma_i \cdot x)(n)$ for a ``large'' set of $i$ and $n$
  (according to some ideal), then we get a contradiction to $(E_j)_{j \in
  \omega}$ being a hyper-$(h_i)_{i \in \omega}$-finiteness witness for $E$. But
  if a many-one reduction only uses values of $g(\gamma_i \cdot x)(n)$ for
  finitely many $i$ on a large set, then since the reals $g(\gamma_i \cdot x)$
  are mutually $1$-generic (or have the stronger genericity properties given in Lemma~\ref{genericity_lemma}), $z$ cannot be anything ``interesting'' and we show $f$ has properties (2), (3), and (4). 

  Our next goal is to give a definition of $f(x)$ 
  that is only in terms of the function $g$ and is not 
  self-referential. First we make a definition that describes when
  we
  recursively use the first clause $f(x)(n) = f(\gamma_i \cdot
  x)(c^{-1}_{\<i,j\>}(n))$ of the definition of $f(x)(n)$ to ``decode'' it.
  Say a sequence $(\<i_0, j_0\>, \<i_1, j_1\>, \ldots,
  \<i_k, j_k\>) \in \omega^{< \omega}$ is \define{$x$-valid} if 
  \[(\gamma_{i_{m-1}}
  \cdots \gamma_{i_0} \cdot x) \mathrel{E_{j_m}} (\gamma_{i_{m}} \cdots
  \gamma_{i_0} \cdot x)\] 
  for every $m \leq k$.
  Note that if a sequence is $x$-valid then every initial segment of it is
  $x$-valid.

  Let $d_x(n)$ be the longest initial segment of $d(n)$ that is $x$-valid.
  So $d_x \colon \omega \to \omega^{< \omega}$.
  Hence if $d_{x}(n) = (\<i_0, j_0\>, \<i_1, j_1\>, \ldots, \<i_k, j_k\>)$,
  then 
  \[f(x)(n) = f(\gamma_{i_0} \cdot x)(c^{-1}_{\<i_0,j_0\>}(n))\]
  by the definition of $f$ since $n \in
  \ran(c_{\<i_0,j_0\>})$ by the definition of $d$, and since $x
  \mathrel{E_{j_0}} \gamma_{i_0} \cdot x$ by the definition of being
  $x$-valid. Similarly, we have inductively that for every $m \leq k$,
  \[
  \begin{split}
  f(\gamma_{i_{m-1}} \cdots \gamma_{i_0} \cdot x)(c^{-1}_{\<i_{m-1},j_{m-1}\>} \circ
  \ldots \circ c^{-1}_{\<i_0,j_0\>}(n)) \\
  = f(\gamma_{i_{m}} \cdots
  \gamma_{i_0} \cdot x)(c^{-1}_{\<i_{m},j_{m}\>} \circ
  \ldots \circ c^{-1}_{\<i_0,j_0\>}(n))
  \end{split}
  \]
  again using the definition of $f$, the definition of $d$, and since 
  $(\gamma_{i_{m-1}}
  \cdots \gamma_{i_0} \cdot x) \mathrel{E_{j_m}} (\gamma_{i_{m}} \cdots
  \gamma_{i_0} \cdot x)$ by the definition of being $x$-valid. Finally,
  either $d_x(n) = d(n)$ and so 
  $c^{-1}_{\<i_{k},j_{k}\>} \circ \ldots \circ
  c^{-1}_{\<i_0,j_0\>}(n) \notin \ran(c_m)$ for any $m$ by the definition
  of $d$, or $d_x(n)$ is a proper initial segment of $d(n) = (\<i_0, j_0\>, \<i_1, j_1\>, \ldots, \<i_k, j_k\>,
  \<i_{k+1}, j_{k+1}\>, \ldots)$, so  
  $c^{-1}_{\<i_{k},j_{k}\>} \circ \ldots \circ
  c^{-1}_{\<i_0,j_0\>}(n) \in \ran(c_{\<i_{k+1},j_{k+1}\>})$
  but 
  $(\gamma_{i_{k}}
  \cdots \gamma_{i_0} \cdot x) \mathrel{\cancel{E_{j_{k+1}}}} (\gamma_{i_{k+1}} \cdots
  \gamma_{i_0} \cdot x)$, since $d_x(n)$ is the longest initial segment of
  $d(n)$ that is $x$-valid.
  In either case, in the definition of 
  $f(\gamma_{i_{k}} \cdots \gamma_{i_0} \cdot x)(c^{-1}_{\<i_{k},j_{k}\>} \circ
  \ldots \circ c^{-1}_{\<i_0,j_0\>}(n))$ we use the second clause of the
  definition, and so 
  \[
  \begin{split}
  f(\gamma_{i_{k}} \cdots \gamma_{i_0} \cdot x)(c^{-1}_{\<i_{k},j_{k}\>} \circ
  \ldots \circ c^{-1}_{\<i_0,j_0\>}(n)) \\
  = g(\gamma_{i_{k}} \cdots
  \gamma_{i_0} \cdot x)(c^{-1}_{\<i_k,j_k\>} \circ \ldots \circ
  c^{-1}_{\<i_0,j_0\>}(n)).
  \end{split}\]
  Putting together the above three displayed equations, we have shown that
  if $d_x(n) = (\<i_0, j_0\>, \<i_1, j_1\>, \ldots, \<i_k, j_k\>)$, then 
  we have the following explicit definition of $f(x)$ in terms of $g$.
  \[f(x)(n) = g(\gamma_{i_{k}} \cdots
  \gamma_{i_0} \cdot x)(c^{-1}_{\<i_k,j_k\>} \circ \ldots \circ
  c^{-1}_{\<i_0,j_0\>}(n)).\]
  To make this definition more compact, we introduce two more functions. 
  Define $y_x \from \omega \to [x]_E$ and $b_x \from \omega \to \omega$ as
  follows. If $d_x(n) = (\<i_0, j_0\>, \ldots,
  \<i_k,j_k\>)$, then $y_x(n) = \gamma_{i_{k}} \cdots
  \gamma_{i_0} \cdot x$ and $b_x(n) = c^{-1}_{\<i_k,j_k\>} \circ \ldots \circ
  c^{-1}_{\<i_0,j_0\>}(n)$. Hence for all $n$,
  \[f(x)(n) = g(y_x(n))(b_x(n)). \tag{*}\]
  That is for all $n$, $f(x)(n)$ codes the bit $b_x(n)$ of $g(y_x(n))$.
  Note that for all $n$, $b_x(n) \geq b(n)$ since $d_x(n)$ is an initial
  segment of $d(n)$. 

  Similarly, we define $d_{s,x} \from \omega \to \omega^{< \omega}$ by
  letting $d_{s,x}(n)$ be the longest initial segment of $d_s(n)$ that is
  $x$-valid. Note that $d_x(n) = d_{s,x}(n)$ for sufficiently large $s$
  (i.e. $s \geq \max d(n)$). Define also $y_{s,x} \from \omega \to [x]_E$
  and $b_{s,x} \from \omega \to \omega$ as follows. If $d_{s,x}(n) =
  (\<i_0, j_0\>, \ldots, \<i_k,j_k\>)$, then $y_{s,x}(n) = \gamma_{i_{k}}
  \cdots \gamma_{i_0} \cdot x$ and $b_{s,x}(n) = c^{-1}_{\<i_k,j_k\>} \circ
  \ldots \circ c^{-1}_{\<i_0,j_0\>}(n)$. An identical kind of induction to
  the one above using the properties of being $x$-valid shows that for all
  $n$ and $s$, \[f(x)(n) = f(y_{s,x}(n))(b_{s,x}(n)). \tag{**}\] Note,
  though, that in this equation (**) we have $f$ on the right hand side
  instead of $g$. This is because it is possible that $d_x(n) \supsetneq
  d_s(n)$ and so $n$ needs to be further decoded using functions $c_m$ for
  $m > s$.

  Our analysis of reals that are many-one reducible to $f(x)$ will be based
  on analyzing a finitely branching tree built out of elements in $[x]_E$,
  which is related to (*) above. But it does not reflect the above equations exactly, because we will need these trees (and the functions $t_x$ and $t_{s,x}$) to be arithmetically definable relative to $\bigoplus_{j \in \omega} \gamma_j \cdot x$, and for them to not depend on the witness $(E_j)_{j \in \omega}$. Let $[x]_E^{< \omega}$ be the set of
  finite sequences $(y_0, \ldots, y_l)$ so that $y_i \in [x]_E$ for all $i
  \leq l$. We define a function $t_{x}(n) \from \omega \to [x]_E^{<
  \omega}$ as follows. Given $d(n) = (\<i_0,j_0\>, \ldots, \<i_k,j_k\>)$,
  consider the sequence $(x, \gamma_{i_0} \cdot x, \ldots, \gamma_{i_k}
  \cdots \gamma_{i_0} \cdot x)$. 
  This sequence may contain elements that
  are repeated so we define $t_{x}(n)$ to be a ``de-duplicated'' version
  of this sequence, so $t_{x}(n) = (y_0, \ldots, y_l)$ has the same
  elements as $(x, \gamma_{i_0} \cdot x, \ldots, \gamma_{i_k} \cdots
  \gamma_{i_0} \cdot x)$, but where each element occurs exactly once.
  Precisely, let $y_0 = x$ and $y_{j+1}$ be the first element of the
  sequence $(x, \gamma_{i_0} \cdot x, \ldots, \gamma_{i_k} \cdots
  \gamma_{i_0} \cdot x)$ that is not equal to $y_m$ for any $m \leq j$.
  Intuitively, if $t_{x}(n) = (y_0, \ldots, y_l)$, this means $y_0 = x$, and presuming that $d(n)$ is $x$-valid,
  then $f(y_0)(n)$ codes a bit of $f(y_1)$ which codes a bit of $f(y_2)$,
  \ldots, which codes a bit of $f(y_l)$, which is equal to a bit of
  $g(y_i)$ for some $i \leq l$. (We ultimately code a bit of $g(y_i)$ for some $i \leq l$ instead of a bit of $g(y_l)$ because of how we have de-duplicated this sequence, and so $y_x(n)$ may not equal the last element $y_l$ of the sequence $t_x(n)$, even if $d(n)$ is $x$-valid).   
  Note that regardless of whether $d(n)$ is
  $x$-valid, $y_x(n)$ is an element of $t_x(n)$. 
  
  One final fact we
  will often use about the relationship between $y_x(n)$ and $t_x(n)$ is
  that if $\max d(n) \leq s$, $r = (y_0, \ldots, y_l)$, $y_l
  \mathrel{\cancel{E_s}} x$, and $t_x(n) \supset r$, then $y_x(n) = y_i$
  for some $i \leq l$. That is, in this case even though $t_x(n)$
  may contain many elements not in $r$, the value $y_x(n)$ must come from
  $r$. This is since any part of the sequence $d(n)$ that
  yields part of $t_x(n)$ that
  extends $r$ cannot be $x$-valid since $y_l
  \mathrel{\cancel{E_s}} x$, and $\max d(n) \leq s$.

  Note that we are defining $t_x(n)$ using the function
  $d(n)$ instead of $d_x(n)$ because we want $t_x(n)$ to be arithmetically
  definable relative to $\bigoplus_{j \in \omega} \gamma_j \cdot x$. This
  is so we can use 
  the idea of Proposition~\ref{prop:arithmetic_tree}, relative to
  $\bigoplus_{j \in \omega} \gamma_j \cdot x$. 
  (The definition of $d_x(n)$ depends on our hyper-Borel-finiteness
  witness $(E_j)_{j \in \omega}$ and we have no bound on its complexity in the Borel hierarchy).

  We will also define a similar function to $t_x$ but using the function $d_s(n)$
  instead of $d(n)$. Precisely, 
  define $t_{s,x}(n) \from \omega \to [x]_E^{< \omega}$ as
  follows. Given $d_s(n) = (\<i_0,j_0\>, \ldots, \<i_k,j_k\>)$, let
  $t_{s,x}(n)$ be the de-duplicated version of the sequence $(x, \gamma_{i_0}
  \cdot x, \ldots, \gamma_{i_k} \cdots \gamma_{i_0} \cdot x)$ as in the
  definition of $t_x$. Note that
  since $d_s(n) = d(n)$ for $s \geq \max d(n)$, we have that $t_x(n) =
  t_{s,x}(n)$ if $s \geq \max d(n)$. An important property of 
  $t_{s,x}$ is that its values (unlike $t_x$) form a
  finitely branching tree. Precisely, If $t_{s,x}(n) = (y_0, \ldots, y_l)$,
  we must have that for every $k \leq l$, $y_k = \gamma_i \cdot y_j$ for
  some $i \leq s$ and $j \leq k$. This is by definition of $d_s$ and $t_{s,x}$.
  Hence, the downward closure of all the values of $t_{s,x}(n)$ forms a
  finitely branching tree in $[x]_E^{< \omega}$. Mostly (except at the end
  of Claim 3), using Lemma~\ref{lem:coding_lemma} we will work on sets $B
  \subset \omega$ where $\max d(\rho(n)) \leq s$, and hence $t_{x}(\rho(n))
  = t_{s,x}(\rho(n))$ for all $n \in B$.

  Because we have introduced many different functions, we briefly
  summarize:
  \begin{itemize}
    \item $g \from \cantor \to \cantor$ is the generic function from
    Lemma~\ref{genericity_lemma}
    whose range is a
    set of mutual $1$-generics, and so that if $x \mathrel{E} y$ and $z
    \leq_m g(x)$ via a
    many-one reduction with infinite range, then $z \nleq_m f_i(y)$ for all $i
    \in \omega$. 
    \item $(E_j)_{j \in \omega}$ are the witness that $E$ is
    hyper-$(h_i)_{i \in \omega}$-finite. The functions $(h_i)_{i \in
    \omega}$ are those that are arithmetically definable from
    $\bigoplus_j \gamma_j \cdot x$ (i.e. arithmetically definable from the
    orbit of $x$). 
    \item $f \from \cantor \to \cantor$ is the Borel reduction from $E$ to
    $\equiv_m$ we're building. The definition of $f$ in terms of $g$, $(E_j)_{j
    \in \omega}$ and $(c_m)_{m \in \omega}$ is given at the beginning of
    the proof.
    \item $(c_m)_{m \in \omega}$ are the ``coding functions'' used to
    ensure that if $x \mathrel{E} y$, then $f(x) \leq_1 f(y)$. Precisely,
    if $x \mathrel{E_j} \gamma_i \cdot x$, then
    $f(\gamma_i \cdot x) \leq_1 f(x)$ via $c_{\<i,j\>}$. Each $c_m$ is
    computable, injective, and increasing, but the sequence $(c_m)_{m \in
    \omega}$ is not uniformly computable. The $c_m$ have disjoint ranges.
    The sequence $(c_m)_{m \in \omega}$ is a ``generic'' such sequence and is constructed
    in Lemma~\ref{lem:coding_lemma}. The functions $d, d_s \colon \omega \to
    \omega^{< \omega}$ and $b, b_s \from \omega \to
    \omega$
    are associated functions used for decoding and defined in
    Definition~\ref{defn:coding_decoding}.
    \item The function $d_x \from \omega \to \omega^{< \omega}$ is defined
    so that $d_x(n)$ is the longest initial segment of $d(n)$ that is $x$-valid,
    where we define $x$-valid sequences according to which clause of the
    definition of $f(x)(n)$ would be used to decode them. Using $d_x$, we
    then gave a definition (*) above of the function $f(x)$ just
    in terms of $g$: $f(x)(n) = g(y_x(n))(b_x(n))$, where $y_x \from \omega
    \to [x]_E$, and $b_x \from \omega \to \omega$ were defined in terms of
    $d_x(n)$. Similarly, $y_{s,x}$,
    $b_{s,x}$ and $d_{s,x}$ are defined analogously to $y_x$, $b_x$, and
    $d_x$ but using $d_s$ instead of $d$.
    Typically below (except at the end
    of Claim 3) we will work on
    sets $B \subset \omega$ on which $\max d(n) \leq s$, and hence there is
    no difference in these functions e.g. $y_x(n) = y_{s,x}(n)$, $b_{x}(n) = b_{s,x}(n)$, and $d_x(n) = d_{s,x}(n)$, $t_{x}(n) = t_{s,x}(n)$, for all $n \in B$.
    %Note that $f$, the definition of a sequence being $x$-valid, $d_x$, $y_x$, and $b_x$, and $y_{s,x}, b_{s,x}$, and $d_{s,x}$ all depend on the witness $(E_j)_{j \in \omega}$.

    \item The function $t_x \from \omega \to [x]_E^{< \omega}$ maps each
    bit $n$ to the sequence of distinct $y_0, y_1, \ldots, y_k$ where $y_0
    = x$ and 
    $f(x)(n)$ is a coded bit of $f(y_1)$ which is a coded bit of $f(y_2)$
    \ldots which is a coded bit of $f(y_l)$, assuming $d(n)$ is
    $x$-valid. Note that $y_{x}(n)$ is an element of $t_{x}(n)$ for all
    $n$. The function $t_{s,x}$ is defined similarly to $t_x$, except where
    we use the sequence $d_s(n)$ instead of $d(n)$. We use this function
    $t_{s,x}$ because its values
    form a finitely branching tree. Similarly to above, we will typically work on sets $B \subset \omega$ on which $\max d(n) \leq s$, and hence $t_{x}(n) = t_{s,x}(n)$ for all $n \in B$.

    \item We emphasize that $t_x$ and $t_{s,x}$ are arithmetically definable from $\bigoplus_{j \in \omega} \gamma_j \cdot x$, and they do not depend on the witness $(E_j)_{j \in \omega}$. 
    \item The functions $f(x)$, $d_x, y_x, b_x, d_{s,x}, y_{s,x}, b_{s,x}$ all depend on the witness $(E_j)_{j \in \omega}$.
  \end{itemize}

  Now $d_x$ and $b_x$ are not computable in general since $d$ is not
  computable and the set of $x$-valid sequences is also not computable in
  general. However, there are certain computable infinite subsets of $\omega$ on which 
  $d_{s,x}$ and $b_x$ \emph{are} computable. This is key to our arguments:

  \begin{claim1}
    Suppose $\rho \colon \omega \to \omega$ is
    computable, $r = (y_0, \ldots, y_l) \in [x]_E^{< \omega}$, $y_i$ is an
    element of $r$, and $s \in \omega$. Then
    \begin{enumerate} 
    \item $A = \{n \in \omega \colon
    t_{s,x}(n) = r \land y_{s,x}(n) = y_i\}$ is computable, and $d_{s,x }
    \restriction A$ is computable. Hence if $B \subset \omega$ is
    computable and $\max d(\rho(n)) \leq s$ for all $n \in B$, then 
    $A' = \{n \in B \colon
    t_x(\rho(n)) = r \land y_x(\rho(n)) = y_i\}$ is computable and $b_x
    \circ \rho$ is computable on $A'$. 

    \item If $y_l \mathrel{\cancel{E_s}} x$, then 
    $A = \{n \colon t_{s,x}(\rho(n)) \supset
    r \land y_x(\rho(n)) = y_i\}$ and $d_{s,x} \restriction A$ are
    computable. 
    Hence if $B \subset \omega$ is
    computable and $\max d(\rho(n)) \leq s$ for all $n \in B$, then 
    $A' = \{n \in B \colon
    t_x(\rho(n)) \supset r \land y_x(\rho(n)) = y_i\}$ is computable and $b_x
    \circ \rho$ is computable on $A'$. 
\end{enumerate}
  \end{claim1}
  \begin{proof}\renewcommand{\qedsymbol}{$\square$ Claim 1.} 
    The idea is that given $r$ and $s$, there is a finite amount of
    information about how group elements $\gamma_i$ for $i \leq s$ act
    between elements of $r$, and how elements of $r$ are $E_j$ related for
    $j \leq s$. From this we can compute all of the above.

    More precisely, the set of tuples $(i,j_0,j_1)$ such that $i \leq s$ and $j_0,j_1 \leq
    l$ and $\gamma_i \cdot y_{j_0} = y_{j_1}$ is finite. Suppose we are
    given $d_s(n) = (\<i_0, j_0\>, \ldots,
    \<i_k,j_k\>)$, where $i_m, j_m \leq s$ for every $m \leq k$ by
    definition of $d_s$. Then for each $m \leq k$ we can iteratively compute which
    element of $r$ is equal to $\gamma_{i_{m}} \cdots \gamma_{i_0} \cdot
    x$, provided all previous values of $\gamma_{i_{m'}} \cdots
    \gamma_{i_0} \cdot x$ for $m' < m$ have been elements of $r$. We can
    also similarly compute the least $m$ so that $\gamma_{i_{m}} \cdots
    \gamma_{i_0} \cdot x$ is not an element of $r$. 

    Similarly, the set of tuples $(i_0, i_1, j)$ such that $j \leq s$ and
    $i_0, i_1 \leq l$ so that $y_{i_0} \mathrel{E_j} y_{i_1}$ is finite.
    From this information, if $t_{s,x}(n) = r$, we can 
    determine what subsequences of $d_{s}(n)$ are $x$-valid, and hence
    compute $d_{s,x}(n) \restriction A$ in case (1). In case (2), note that
    since $y_l \mathrel{\cancel{E_s}} x$, the least $m$ so that 
    $(\<i_0,j_0\>, \ldots, \<i_m,j_m\>)$ is not $x$-valid must have the
    property that 
    $\gamma_{i_{m'}} \cdots \gamma_{i_0} \cdot x$ is an element of $r$ for
    all $m' \leq m$. Hence in this case we can also compute $d_{s,x}
    \restriction A$. The claim follows. 
  \end{proof}

  We will prove two main claims about $z \in \cantor$ such that $z \leq_m
  f(x)$. Recall that if $y, z \in \cantor$ and $A
  \subset \omega$ is computable, by $z \restriction A \leq_m y$ we mean
  there is a computable function $\rho \from A \to \omega$ so that for all
  $n \in A$, $z(n) = y(\rho(n))$.

  \begin{claim2}
    Suppose $x,z \in \cantor$ are such that $z \leq_m f(x)$, and $z$ is
    incomputable. Then there is a computable infinite set $A \subset
    \omega$ and some $y \mathrel{E} x$ so that $z \restriction A \leq_m
    g(y)$ via a many-one reduction with infinite range. 
  \end{claim2}
  \begin{proof}\renewcommand{\qedsymbol}{$\square$ Claim 2.} 
  Let $\rho \from \omega \to \omega$ be the many-one reduction witnessing
  $z \leq_m f(x)$. 
  The idea of the proof is to make a finitely branching tree $T$ of
  elements of $[x]_E$ where $(y_0, \ldots, y_l) \in T$ means that
  a ``large'' (according to some ideal) number of bits $f(x)(\rho(n))$ code values
  of $f(y_0)$ which code values of $f(y_1)$ \ldots which code values of
  $f(y_l)$ (assuming the code is $x$-valid). If the tree is finite, a ``large'' number of bits of the
  many-one reduction can be many-one reduced to a single $g(y)$ for $y \in
  [x]_E$. If the tree is infinite, some finite branch $r$ in the tree must be
  coded in a way that is not $x$-valid, otherwise we would
  contradict that hyper-$(h_i)$-finiteness of the $(E_j)_{j \in \omega}$
  (since our tree will be arithmetically definable relative to $\bigoplus_{i
  \in \omega} \gamma_i \cdot x$).
  Then we can find a ``large'' set of incorrectly coded bits corresponding
  to extensions of $r$ 
  that reduce to a single $g(y)$. We will make this tree using the
  same idea as Proposition~\ref{prop:arithmetic_tree} using the function
  $t_x$. 

  We break into two cases depending on which case holds
  for $\rho$ in Lemma~\ref{lem:coding_lemma}.(1). 

  Case 1: there is a computable set $B$ and an $s$ so that $\max d(\rho(n))
  \leq s$ for all $n \in B$ and $b_s(\rho(B))$ is infinite. 

  In this case, let $I$ be the ideal on subsets of $B$ where for $A \subset
  B$, we have $A \in I$ if $b_s(\rho(A))$ is finite. Let $T = \{r \in
  [x]_E^{< \omega} \colon \{n \in B \colon t_x(\rho(n)) \supset r\} \notin
  I\}$.  Hence $T$ is a finitely branching tree analogously to
  Proposition~\ref{prop:arithmetic_tree}, and it is arithmetically
  definable relative to $\bigoplus_{i \in \omega} \gamma_i \cdot x$. (The
  reason we are using this ideal $I$ rather than the Fr\'echet ideal $I_1$
  is in order to make the proof that the many-one reduction has infinite
  range easier).

  If $T$ is finite, as in Proposition~\ref{prop:arithmetic_tree}, there
  must be some $r \in T$ such that $\{n \in B \colon t_x(\rho(n)) = r\}
  \notin I$. Let $A = \{n \in B \colon t_x(\rho(n)) = r\}$. Let $r = (y_0,
  \ldots, y_l)$. Since $y_x(n)$ is an element of $t_x(n)$ for every $n$, we
  can partition $A$ into the finitely many sets $A_i = \{n \in B \colon
  t_x(\rho(n)) = r \land y_x(\rho(n)) = y_i\}$ for each $i \leq l$. Hence,
  there must be some $y_i$ so that the set $A_i \notin I$. Fix this $i$.
  Now for every $n \in A_i$, $f(x)(\rho(n)) = g(y_i)(b_x(\rho(n)))$ by (*).
  Since by Claim 1, $b_x \circ \rho$ is computable on $A_i$, we therefore
  have $z \restriction A_i \leq_m g(y_i)$. To see this many-one reduction
  has infinite range note first that $b_s(\rho(A_i))$ is infinite by
  definition of $I$, $b(\rho(n)) = b_s(\rho(n))$ for all $n \in A_i$ (since
  $\max d(\rho(n)) \leq s$ for all $n \in B$), and so $b(\rho(A_i))$ is
  infinite. Finally, $b_x(m) \geq b(m)$ for all $m$ by definition of $b_x$,
  and so $b_x(\rho(A_i))$ is infinite. 

  Now suppose $T$ is infinite. Then there is an infinite branch in $T$ that
  is arithmetically definable from $\bigoplus_{i \in \omega} \gamma_i \cdot
  x$. Since each $t_x(n)$ contains no repeated elements by definition, the
  set of $y \in [x]_E$ in this branch is infinite. So there is some
  $r = (y_0, \ldots, y_l) \in T$ in this branch so that $y_l
  \mathrel{\cancel{E_s}} x$. Otherwise, this would contradict that $E_s$ is
  $(h_i)$-finite. Now for all $n$ such that $t_x(n) \supset r$, we must
  have $y_x(n) = y_i$ for some $i \leq l$. So since $\{n \in B \colon
  t_x(\rho(n)) \supset r\} \notin I$, there must be some $y_i$ so that $A =
  \{n \in B \colon t_x(\rho(n)) \supset r \land y_x(\rho(n)) = y_i\} \notin I$.
  Since $f(x)(\rho(n)) = g(y_i)(b_x(\rho(n)))$ for all $n \in A$, we have
  $z \restriction A \leq_m g(y_i)$ by Claim 1 since $b_x$ is computable on
  $A$. This many-one reduction has infinite range on $A$ by the same
  argument as the above paragraph: $b_s(\rho(A))$ is infinite, $b_s(\rho(A))
  = b(\rho(A))$, and $b_x(m) \geq b(m)$ for all $m$.

  Case 2: There is an $s \in \omega$ so that $b_s(\rho(\omega))$ is finite. 

  Let $s'$ be larger than both $s$ and $\max d(b_s(\rho(n)))$ for all $n
  \in \omega$. This is finitely many values since there are only finitely
  many values of $b_s(\rho(n))$. So $\max d(\rho(n)) \leq s'$ for all $n
  \in \omega$ since $d(n) = d_s(n) \concat d(b_s(n))$ for every $n, s$.
  Let $T = \{r \in [x]_E^{< \omega} \colon \{n \colon
  t_x(\rho(n)) \supset r\} \text{ is infinite}\}$. $T$ is a finitely
  branching tree as in Proposition~\ref{prop:arithmetic_tree} since $\max
  d(\rho(n)) \leq s'$ for all $n \in \omega$, and so $t_{s',x}(\rho(n)) =
  t_x(\rho(n))$ for all $n \in \omega$. 

  If $T$ is finite, then for all but finitely many $n$, we have
  $t_x(\rho(n)) = r$ for some $r \in T$. For each $r = (y_0, \ldots, y_l)
  \in T$ and $i \leq l$, let $A_{r,i} = \{n \colon t_x(\rho(n)) = r
  \land y_x(\rho(n)) = y_i\}$. So all but finitely many $n \in \omega$ are
  in some $A_{r,i}$, and there are finitely many sets $A_{r,i}$. 
  By Claim 1, every $A_{r,i}$ is computable and $z \restriction A_{r,i}
  \leq_m g(y_i)$ for each $A_{r,i}$. If all these many-one reductions
  have finite range, then $z$ is computable, since there are finitely many
  $A_{r,i}$. This is a contradiction. So one of these many-one reductions
  $z \restriction A_{r,i} \leq_m g(y_i)$ has infinite range.

  Now suppose $T$ is infinite, and so there is an infinite
  branch in $T$ that is arithmetically definable from $\bigoplus_{i \in
  \omega} \gamma_i \cdot x$. The infinite set of $y \in [x]_E$ that appear
  in this branch is arithmetically definable from $\bigoplus_{i \in \omega}
  \gamma_i \cdot x$. So there is some $r = (y_0, \ldots, y_l)$ in this branch
  so that $y_l \mathrel{\cancel{E_s}} x$. Otherwise, this would contradict
  that $E_s$ is $(h_i)$-finite. Let $A = \{n \colon t_x(\rho(n)) \supset
  r\}$. Let $A_i = \{n \in A \colon y_x(\rho(n)) = y_i\}$, so $A_0, \ldots
  A_l$ partition $A$. Since $\{t_x(\rho(n)) \colon n \in A\}$ is infinite
  since it includes our infinite branch, we can find $i \leq l$ so that
  $\{t_x(\rho(n)) \colon n \in A_i\}$ is infinite. In particular, the lengths of
  these $|t_x(\rho(n))|$ where $n \in A_i$ are arbitrarily large. Then
  $A_i$ is computable and $b_x \circ \rho$ is computable on $A_i$ by Claim
  1. Since $f(x)(\rho(n)) = g(y_i)(b_x(\rho(n)))$ by (*) we have that 
  $z \restriction A_i \leq_m g(y_i)$. 

  We now show the many-one reduction $b_x \circ \rho$ witnessing $z
  \restriction A_i \leq_m g(y_i)$ has infinite range on $A_i$. For all $n
  \in A_i$, $t_x(\rho(n)) \supset r$, and the difference in their lengths
  is bounded by $|t_x(\rho(n))| - |r| \leq |d(\rho(n))| - |d_x(\rho(n))|$.
  This is because the elements of $t_x(\rho(n))$ that are not in $r$ must come
  from elements of $d(\rho(n))$ that are not $x$-valid (i.e. not in
  $d_x(\rho(n))$) since $y_l \mathrel{\cancel{E_s}} x$. Finally
  $|d(\rho(n))| - |d_x(\rho(n))| \leq b_x(\rho(n))$, since $b(\rho(n)) \geq 0$
  and $b(\rho(n))$ is obtained from $b_x(\rho(n))$ by taking additional
  inverse images of $b_x(\rho(n))$ by the elements of $c_m$ that are in
  $d(\rho(n))$ but not in $d_x(\rho(n))$, and the $c_m(n) > n$ for all $n$. 
  Hence, $b_x(\rho(n)) \geq |t_x(\rho(n))| - |r|$ and since the
  lengths of $|t_x(\rho(n))|$ are unbounded on $A_i$, the values of
  $b_x(\rho(n))$ are also unbounded on $A_i$.
  \end{proof}

  To show $f$ has property (2), we prove the contrapositive.
  Suppose $f(y) \leq_m f(x)$ for some $x, y \in \cantor$. By
  Lemma~\ref{lem:coding_lemma} there is a computable infinite set $D_0$
  so that $D_0$ is disjoint from $\ran(c_m)$ for every $m$, and hence $f(y)
  \restriction D_0 = g(y) \restriction D_0$, so $g(y) \restriction D_0 \leq_m
  f(x)$. Note that $g(y) \restriction D_0$ is incomputable, since any
  $1$-generic restricted to a computable set is incomputable.
  By Claim 2, there is some $y' \mathrel{E} x$ and
  infinite $A \subset D_0$ so that $g(y) \restriction A \leq_m g(y')$.
  (By applying the Claim to $z = \{n \colon \text{the $n$th element of $D_0$ is
  in $g(y)$}\}$. Note that $z \leq_m f(x)$.) 
  We
  must have $y = y'$, otherwise a computable subset of $g(y)$ is many-one
  reducible to $g(y')$ contradicting their mutual $1$-genericity. Hence $y
  = y' \mathrel{E} x$. Note we are only assuming here that $g$ maps to a set of mutual $1$-generics, and not that it has the stronger properties given in Lemma~\ref{genericity_lemma}.
  
  To show $f$ has property (3), for every $i \in \omega$, note first that $f(x)
  \nleq_m f_i(x)$. This is since on the infinite computable set $D_0$, from Lemma~\ref{lem:coding_lemma}
  $f(x) \restriction D_0 = g(x) \restriction D_0$, and so if $g(x) \restriction
  D_0 \leq_m f_i(x)$, this would contradict the properties of $g$ from
  Lemma~\ref{genericity_lemma}, letting $z = g(x) \restriction D_0$. Conversely, suppose $f_i(x) \leq_m f(x)$. Then by Claim 2, there is some $y \mathrel{E} x$ and infinite $A \subset D_0$ so that $f_i(x) \restriction A \leq_m g(y)$. But then $z = f_i(x) \restriction A$ contradicts the properties of $g$ from Lemma~\ref{genericity_lemma}.
  
  Now we prove a similar result to Claim 2 above but where we analyze all the
  columns of a many-one reduction using the ideal $I_2$. This is required to
  prove part (4) of the theorem.

  \begin{claim3}
  Suppose $x,z \in \cantor$ are such that $z \leq_m f(x)$, and
  $z^{[n]}$ is
  incomputable for every $n$. Then
  there is a computable set $B \subset \omega$ with $B \notin I_2$ and some
  $y \mathrel{E} x$ so that $z \restriction B \leq_m g(y)$. 
  \end{claim3}

  \begin{proof}\renewcommand{\qedsymbol}{$\square$ Claim 3.} 
  Let $\rho \from \omega \to \omega$ be the many-one reduction witnessing
  $z \leq_m f(x)$. We break into two cases depending on which case holds
  for $\rho$ in Lemma~\ref{lem:coding_lemma}.(2). 

  Case 1: There is a computable set $B$ so that $\max d (\rho(n))
  \leq s$ for all $n \in B$ and for all but finitely many $i$,
  $b_s(\rho(B^{[i]}))$ is infinite.

  In this case, we use a similar idea as in Claim 2. Note that $B \notin
  I_2$. Let
  $T = \{r \in
  [x]_E^{< \omega} \colon \{n \in B \colon t_x(\rho(n)) \supset r\} \notin
  I_2\}$. So as in Proposition~\ref{prop:arithmetic_tree}, $T$ is a
  finitely branching tree
  that is arithmetically definable relative to $\bigoplus_{i \in \omega}
  \gamma_i \cdot x$.

  Suppose $T$ is finite. Then as in
  Proposition~\ref{prop:arithmetic_tree}, there must be some $r \in T$ such
  that $\{n \in B \colon t_x(\rho(n)) = r\} \notin I_2$. Let $A = \{n \in
  B \colon t_x(\rho(n)) = r\}$. Let $r = (y_0, \ldots, y_l)$. We can
  partition $A$ into finitely many sets $A_i = \{n \in A \colon y_x(\rho(n))
  = y_i\}$ for each $i \leq l$, and so there must be some $y_i$ so that the
  set $A_i \notin I_2$. Now for every $n \in A_i$, $f(x)(\rho(n)) =
  g(y_i)(b_x(\rho(n)))$. Since by Claim 1, $A_i$ is computable and $b_x \circ \rho$ is computable on
  $A_i$, we therefore have $z \restriction A_i \leq_m g(y_i)$. 

  Now suppose $T$ is infinite, so there is an infinite branch in $T$ that
  is arithmetically definable from $\bigoplus_{i \in \omega} \gamma_i \cdot
  x$. There must be some $r = (y_0, \ldots, y_l)$ in this branch so that
  $y_l \mathrel{\cancel{E_s}} x$. Otherwise, this would contradict that
  $E_s$ is $(h_i)$-finite. Now for all $n$ such that $t_x(n) \supset r$, we
  must have $y_x(n) = y_i$ for some $i \leq l$. So since $\{n \in B \colon
  t_x(\rho(n)) \supset r\} \notin I_2$, there must be some $y_i$ so that $A
  = \{n \in B \colon t_x(\rho(n)) \supset r \land y_x(\rho(n)) = y_i\} \notin I_2$.
  $A$ is computable and $b_x \circ \rho \restriction A$ is computable by
  Claim 1. So since $f(x)(\rho(n)) = g(y_i)(b_x(\rho(n)))$ for all $n \in
  A$ we have $z \restriction A \leq_m g(y_i)$. 

  Case 2: There is an $s$ so that for infinitely many $i$,
  $b_s(\rho(\omega^{[i]}))$ is finite. Let $B = \bigunion \{\omega^{[i]}
  \colon b_s(\rho(\omega^{[i]})) \text{ is finite}\}$. $B$ is not
  necessarily computable, but it is arithmetical.
  %, and all of its vertical
  %sections $B^{[i]}$ are computable since they are all empty or equal to
  %$\omega^{[i]}$. 
  Now let $T = \{r \in [x]_E^{< \omega} \colon \{n
  \in B \colon t_{s,x}(\rho(n)) \supset r\} \notin I_2\}$. 

  If $T$ is infinite, then there must be some $r = (y_0, \ldots, y_l) \in
  T$ so that $y_l \mathrel{\cancel{E_s}} x$, otherwise there would be an
  infinite branch in $T$
  that is arithmetically definable
  from $\bigoplus_{i \in \omega} \gamma_i \cdot x$ and an infinite subset
  of $E_s$ contradicting that $(E_j)_{j \in \omega}$ is a
  hyper-$(h_i)$-finiteness witness. So fix an $r \in T$ so
  that $\{n \in B \colon t_{s,x}(\rho(n)) \supset r\} \notin I_2$. Then the
  larger set $A = \{n \in \omega \colon t_{s,x}(\rho(n)) \supset r\}$
  (where we have replaced $B$ with $\omega$) also has $A \notin I_2$.
  Finally, there must be some $y_i$ with $i \leq l$ so that $A_i = \{n \in
  A \colon y_x(\rho(n)) = y_i\}$ has $A_i \notin I_2$. This set $A_i$ is
  computable by Claim 1, and $z \restriction A_i \leq_m g(y_i)$. 

  If $T$ is finite and there is an $i$ so that $b_s(\rho(\omega^{[i]}))$ is
  finite and all but finitely many $n \in \omega^{[i]}$ have that $d_s(\rho(n))$
  is $x$-valid, then we claim $z^{[i]}$ is computable,
  which is a contradiction. Now $f(x)(n) = f(y_{s,x}(n))(b_{s,x}(n))$ for
  all $n$ by (**), and if $d_s(n)$ is $x$-valid, then $b_{s,x}(n) = b_s(n)$
  and $b_s$ is computable. So for each $r \in T$ and $y_j$ in $r$,
  $A_{r,j} = \{n \colon t_{s,x}(\rho(n)) = r \land y_{s,x}(\rho(n)) =
  y_j\}$ is computable by Claim 1, and $z \restriction (\omega^{[i]}
  \inters A_{r,j}) \leq_m f(y_j)$ via a reduction that has
  finite range since $b_s(\rho(\omega^{[i]}))$ is finite. So since the
  finitely many sets $A_{r,j} \inters \omega^{[i]}$ are computable and
  disjoint, and their union is equal to $\omega^{[i]}$ mod finite, we have
  that $z \restriction \omega^{[i]} = z^{[i]}$ is computable since we can
  partition it mod finite into finitely many computable pieces. 

  Thus, for all $i$ such that $\omega^{[i]} \subset B$, there are
  infinitely many $n \in \omega^{[i]}$ so $d_s(\rho(n))$ is not $x$-valid.
  So let $B' = \{n \in B \colon d_s(\rho(n)) \text{ is not $x$-valid}\}$.
  Then $B' \notin I_2$. Let $T' = \{r \in [x]_E^{< \omega} \colon \{n
  \in B' \colon t_{s,x}(\rho(n)) \supset r\} \notin I_2\}$. Then $T'$
  is finite since it is a subset of $T$, and there must be some $r \in
  T'$ and some $y_i \in r$ so that $\{n \in B' \colon
  t_{s,x}(\rho(n)) = r \land y_x(\rho(n)) = y_i\} \notin I_2$. Hence, the
  larger computable set: $A = \{n \colon t_{s,x}(\rho(n)) = r \land
  d_s(\rho(n)) \text{ is not $x$-valid} \land y_x(\rho(n)) = y_i\} \notin
  I_2$. Finally, $z \restriction A \leq_m g(y_i)$ by Claim 1.
\end{proof}

  Now to prove (4) given the above claim, let $D_0$ be a computable
  infinite set
  disjoint from $\ran(c_m)$ for every $m$. So $f(x) \restriction D_0 = g(x)
  \restriction D_0$. Then assuming
  that $\bigoplus_{i \in \omega} f(x_i) \leq_m f(x)$, we also have that
  $\bigoplus_{i \in \omega} (f(x_i) \inters D_0) \leq_m f(x)$, and hence
  $\bigoplus_{i \in \omega} (g(x_i) \inters D_0) \leq_m f(x)$. 
  But then by the Claim
  3, 
  there is a single $y \in [x]_E$ and a computable infinite set $B \subset
  \bigoplus_{i \in \omega} D_0$ so $B \notin
  I_2$ so that 
  $\bigoplus_{i \in \omega}(g(x_i) \inters D_0) \restriction B
  \leq_m g(y)$. Taking some $i$ so that $B^{[i]}$ is infinite and $x_i \neq
  y$ gives a contradiction since $g$ maps to a set of mutual
  $1$-generics.
\end{proof}

Hence, we have the following corollaries
\begin{cor}\label{m_cor}
  Suppose $\equiv_T$ is hyper-recursively-finite. Then
  \begin{enumerate}
    \item Conjecture~\ref{conj:strong_steel_for_m} is false. There is a Borel
    $(\equiv_T,\equiv_m)$-invariant function that is not uniformly invariant
    on any pointed perfect set.
    %Furthermore, for all $x$,
    %$f(x)$ is $\leq_m$-incomparable with $x$ and $\overline{x}$.
    \item \cite[Conjecture 1.1]{M} is false. That is, there is a universal countable Borel equivalence relation which
    is not uniformly universal.
    In particular, $\equiv_m$ and $\equiv_1$ on $\cantor$ are universal countable
    Borel equivalence relations.
    \item There are Borel $(\equiv_T,\equiv_m)$-invariant functions $(f_n)_{n \in \omega}$ on $\cantor$ so that for all $n,m$ and all $x \in \cantor$, $f_n(x)$ and $f_m(x)$ are $\leq_m$-incomparable, and for all $n$, $f_n(x)$ is $\leq_m$-incomparable with the Turing jump $j(x)$ and its complement $\overline{j}(x)$. 
  \end{enumerate}
\end{cor}
\begin{proof}
To prove (1), 
let $f$ be as in Theorem~\ref{m_construction} for the equivalence relation
$E = \equiv_T$.
Let $\Phi_e \from \cantor \to \cantor$ be a
total Turing functional with inverse $\Phi_d \from \cantor \to \cantor$
such that $x, \Phi_e(x), \Phi_e^2(x), \ldots$ are all distinct and have
the same Turing degree. Then if $f$ was uniformly
$(\equiv_T,\equiv_m)$-invariant it would contradict condition (4) of 
Theorem~\ref{m_construction}. 

To prove (2), note first that 
if every countable Borel equivalence relation $E$ is
hyper-Borel-finite, the function $f$ given in Theorem~\ref{m_construction} is a
Borel reduction from $E$ to many-one equivalence $\equiv_m$ and one-one
equivalence $\equiv_1$ on $2^\omega$. 
However, $\equiv_m$ is not uniformly universal by 
\cite[Theorem 1.5.(5)]{M}, so not every universal countable Borel
equivalence relation is uniformly universal. 

(3) follows by inductively making the functions $(f_n)_{n \in \omega}$ using
Theorem~\ref{m_construction}.(3) so $f_{n+1}(x)$ is $\leq_m$-incomparable with
$j(x)$, $\overline{j}(x)$ and $f_m(x)$ for $m \leq n$.
\end{proof}

We remark that it seems it should be straightforward to modify the above
construction in (3) to make a continuum size antichain of functions under
$\leq_m$ instead of a countable
antichain. However, making an infinite descending sequence of functions seems
more difficult.

It is open if $\equiv_T$ being hyper-recursively-finite implies that there is a
counterexample to Conjecture~\ref{conj:M_for_m}. That is, whether the
$\leq_m$-incomparability in Corollary~\ref{m_cor}.(3) can be improved to
incomparability under $\leq_m^\cone$.

It is open whether there is a counterexample to
Martin's conjecture or Steel's conjecture assuming $\equiv_T$ is hyper-recursively-finite:
\begin{question}
  Assume $\equiv_T$ is hyper-recursively-finite. Is Martin's conjecture
  false? Is Steel's conjecture false?  
\end{question}

\section{Open questions}
We pose a conjecture which would give a negative
answer to Question~\ref{hyp-rec-fin}. It states in a strong way that
Turing equivalence cannot be nontrivially written as an increasing union
of Borel equivalence relations. 

\begin{conj}\label{conj:non_approx}
  Suppose we write Turing equivalence as an increasing union
  \mbox{$(\equiv_T) =
  \bigunion_n E_n$} of Borel equivalence
  relations $E_n$ where $E_{n} \subset E_{n+1}$ for all $n$. Then there
  exists a pointed perfect set $P$ and some
  $i$ so that $E_i \restriction P = (\equiv_T \restriction P)$.
\end{conj}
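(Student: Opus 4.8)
The plan is as follows; since Conjecture~\ref{conj:non_approx} is open, what I give is a line of attack rather than a completed argument, and I will flag where it breaks down. First I would relativize: after absorbing a real parameter, assume every $E_n$ is $\Delta^1_1$ and look for a $\Delta^1_1$ pointed perfect set $P$ and an index $i$ with $E_i \restriction P = (\equiv_T \restriction P)$. It is convenient to work with the Borel function $\ell(x,y)$ equal to the least $n$ with $x \mathrel{E_n} y$, defined whenever $x \equiv_T y$; the conclusion then amounts to finding a pointed perfect $P$ on which $\ell$ is bounded.

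Second, I would record the easy half of the picture, namely that Conjecture~\ref{conj:non_approx} implies Question~\ref{hyp-rec-fin} has a negative answer. Let $P$ be pointed perfect, say $P$ is the set of branches of a perfect tree $T$ with $T \leq_T x$ for all $x \in P$, and fix $x \in P$. Let $b$ be the binary sequence recording the direction $x$ takes at the successive splitting nodes of $T$; then $b \equiv_T x$, uniformly, using $T \leq_T x$. Flipping the $n$th bit of $b$ and reading off the corresponding branch of $T$ gives an infinite sequence of pairwise distinct reals in $[x]_{\equiv_T} \cap P$ that is uniformly computable from $x$. Hence if $\equiv_T = \bigunion_n E_n$ is any Borel exhaustion witnessing a positive answer to Slaman and Steel's question, then $E_i \restriction P \neq (\equiv_T \restriction P)$ for every pointed perfect $P$ and every $i$, since equality would put a uniformly computable infinite sequence inside some $[x]_{E_i}$. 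So proving Conjecture~\ref{conj:non_approx} entails proving $\equiv_T$ is not hyper-recursively-finite, which is itself open.

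Third, granting the (still open) statement that $\equiv_T$ is not hyper-recursively-finite, I would try to bootstrap it to the full conjecture. Fix $\equiv_T = \bigunion_n E_n$. For a candidate pointed perfect $P$ and index $i$, the only obstruction to $E_i \restriction P = (\equiv_T \restriction P)$ is a point $x \in P$ together with an $x$-computable infinite sequence of distinct reals in $[x]_{\equiv_T} \cap P$ whose $\ell$-values with $x$ exceed $i$ --- in effect, a local copy of the Slaman and Steel phenomenon living inside $P$. The idea would be to build $P$ by a fusion, interleaved with a Martin-style cone or measure argument, so that at each stage one either stabilizes $\ell$ on a further subset or extracts such an obstruction; and then to amalgamate the obstructions obtained across all $i$, using Lusin-Novikov uniformization to keep everything Borel and the uniformity of the branching-flip sequences of the second paragraph, into a single Borel exhaustion of $\equiv_T$ that is recursively finite at every level, contradicting the previous step.

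The step I expect to be the genuine obstacle is exactly the one stressed in the introduction: beyond Slaman and Steel's non-hyperfiniteness theorem there is essentially no structure theory for increasing Borel exhaustions of $\equiv_T$, and in particular no known mechanism for converting ``$E_i \restriction P \neq (\equiv_T \restriction P)$ for every pointed perfect $P$'' into a single global witness, so the amalgamation step above is wholly speculative. It remains possible that the conjecture is outright false, which by the second paragraph would force Question~\ref{hyp-rec-fin} to have a positive answer; so a realistic first milestone is simply to settle Question~\ref{hyp-rec-fin} itself.
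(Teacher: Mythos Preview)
The statement you were asked to prove is Conjecture~\ref{conj:non_approx}, which the paper explicitly poses as an open problem; there is no proof in the paper to compare against. You correctly recognize this and present a line of attack rather than a completed argument, flagging the speculative steps as such.

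Your second paragraph is correct and worth keeping: it spells out the implication, asserted but not argued in the paper, that Conjecture~\ref{conj:non_approx} forces a negative answer to Question~\ref{hyp-rec-fin}. The bit-flipping construction on a pointed perfect tree is the right way to see this, and your argument that the resulting sequence is uniformly computable from $x$ and lies in $[x]_{\equiv_T} \cap P$ is sound.

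The third and fourth paragraphs are, as you say, speculative. The amalgamation step you propose --- converting local failures of $E_i \restriction P = (\equiv_T \restriction P)$ across all pointed perfect $P$ into a single global recursively-finite exhaustion of $\equiv_T$ --- has no known mechanism behind it, and you are right to identify this as the genuine obstacle. This is not a gap in your reasoning so much as an honest acknowledgment that the conjecture is open; the paper itself offers no strategy toward a proof and instead records consequences (the $E_0$-ergodicity of Martin measure) and situates the conjecture in the diagram of open problems at the end of Section~6.
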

 
In the context of probability measure preserving
equivalence relations, an analogous phenomenon of non-approximability has
been proved by Gaboriau and Tucker-Drob \cite{GTD}, e.g. for
pmp actions of property (T) groups. 

We know that Conjecture~\ref{conj:non_approx} implies some consequences of Martin's
conjecture. In particular, Conjecture~\ref{conj:non_approx} implies that Martin measure is $E_0$-ergodic in the
sense of \cite{T}.

\begin{prop}
  Suppose Conjecture~\ref{conj:non_approx} is true. Then if $f \from
  \cantor \to \cantor$ is a Borel
  homomorphism from Turing equivalence to $E_0$, i.e. $x \equiv_T y
  \implies f(x) E_0 f(y)$, then the $E_0$-class of $f(x)$ is 
  constant on a Turing cone.
\end{prop}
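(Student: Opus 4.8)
The plan is to feed $f$ into Conjecture~\ref{conj:non_approx} by using it to build an increasing Borel approximation of $\equiv_T$, and then to extract the cone from the resulting pointed perfect set. Concretely, for $n \in \omega$ I would define
\[E_n = \{(x,y) \colon x \equiv_T y \text{ and } f(x)(k) = f(y)(k) \text{ for all } k \geq n\}.\]
Each $E_n$ is Borel since $f$ and $\equiv_T$ are; each is an equivalence relation, being the intersection of $\equiv_T$ with the equivalence relation ``agree from coordinate $n$ on''; clearly $E_n \subseteq E_{n+1}$ and $E_n \subseteq {\equiv_T}$; and because $f$ is a homomorphism into $E_0$, any $x \equiv_T y$ has $f(x)$ eventually equal to $f(y)$, so $(x,y) \in E_n$ for some $n$. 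Hence $({\equiv_T}) = \bigcup_n E_n$ is an increasing union of Borel equivalence relations, and Conjecture~\ref{conj:non_approx} yields a pointed perfect set $P$ and an index $i$ with $E_i \restriction P = ({\equiv_T}) \restriction P$; that is, any two Turing-equivalent $y, y' \in P$ have $f(y)$ and $f(y')$ agreeing on all coordinates $\geq i$.

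Next I would use that a pointed perfect set represents a cone: fix a base $z$ of $P$, so that for every $w \geq_T z$ there is $y \in P$ with $y \equiv_T w$ (follow the splitting nodes of the tree computing $P$, branching according to the bits of $w$). By the conclusion of the Conjecture, the tail $f(y)\restriction[i,\infty)$ does not depend on the choice of such $y$, so it defines a function $h(w)$ on the cone above $z$. Applying Lusin--Novikov uniformization to the Borel, countable-sectioned relation $\{(w,y) \colon y \in P,\ y \equiv_T w\}$ gives a Borel selector, so $h$ is Borel; and $h$ is $\equiv_T$-invariant, again by the property of $P$. Since a $\equiv_T$-invariant Borel function defined on a Turing-invariant Borel set containing a cone is constant on a cone (a routine consequence of Martin's cone theorem, using Borel determinacy), there is $z^* \geq_T z$ and a fixed $c^* \in 2^{[i,\infty)}$ with $h(w) = c^*$ for all $w \geq_T z^*$.

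To finish: for any $x \geq_T z^*$, pick $y \in P$ with $y \equiv_T x$; then $y \geq_T z^*$, so $f(y)\restriction[i,\infty) = c^*$, while $f(x) \mathrel{E_0} f(y)$ since $f$ is a homomorphism and $x \equiv_T y$. Hence $f(x)$ agrees eventually with any fixed extension $\hat c \in \cantor$ of $c^*$, so $[f(x)]_{E_0} = [\hat c]_{E_0}$ for all $x$ in the cone above $z^*$. The step I expect to be the crux is the well-definedness of $h$: this is precisely where Conjecture~\ref{conj:non_approx} is used, since it is the collapse $E_i \restriction P = ({\equiv_T})\restriction P$ that makes $f(y)\restriction[i,\infty)$ independent of the representative $y \in P \cap [w]_{\equiv_T}$, with Borelness then coming for free from Lusin--Novikov. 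The remaining ingredients — pointed perfect sets covering a cone of degrees, and $\equiv_T$-invariant Borel functions being constant on a cone — are standard and should require only a sentence each.
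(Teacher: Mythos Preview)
Your proposal is correct and follows essentially the same route as the paper: define $E_n$ by ``$x \equiv_T y$ and $f(x),f(y)$ agree from coordinate $n$ on'', apply Conjecture~\ref{conj:non_approx} to get $P$ and $i$, extract a Borel $\equiv_T$-invariant function, and invoke Martin's cone theorem. The only difference is cosmetic: the paper cites \cite[Lemma~3.5]{MSS} to pass to a pointed perfect $P' \subset P$ on which $f$ itself is $\equiv_T$-invariant (not just its tail), whereas you work directly with the tail $f\restriction[i,\infty)$ and use Lusin--Novikov to get a Borel selector; both produce the needed $\equiv_T$-invariant Borel map.
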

\begin{proof}
  Let $E_n$ be the subequivalence relation of $\equiv_T$ defined by $x
  \mathrel{E_n} y$ if $x \equiv_T y$ and $\forall k \geq n (f(x)(k) =
  f(y)(k))$. That is the $f(x)$ and $f(y)$ are equal past the first $n$
  bits. By Conjecture~\ref{conj:non_approx}, there is some $i$ and
  some pointed perfect set $P$ such that $E_i \restriction P = (\equiv_T
  \restriction P)$. Then by \cite{MSS}[Lemma 3.5] there is some pointed
  perfect set $P' \subset P$ such that for $x, y \in P'$, if $x \equiv_T y$,
  then $f(x) = f(y)$. Define $f'(x) = f(y)$ if there is $y \in P'$ such
  that $x \equiv_T y$, and $f'(x) = \emptyset$ otherwise. Thus, $f' \from
  \cantor \to \cantor$ is such that if $x \equiv_T y$, then
  $f'(x) = f'(y)$. Now any homomorphism from $\equiv_T$ to equality must be constant on a Turing cone, so 
  $f'$ is
  constant on a Turing cone. This implies the $E_0$-class of $f$ is constant on a
  cone.
\end{proof}

It is open if Conjecture~\ref{conj:non_approx} implies Martin's conjecture.

\begin{question}
  Assume Conjecture~\ref{conj:non_approx} is true. Does this imply Martin's
  conjecture for Borel functions?
\end{question}

The following is a diagram of some open questions surrounding
Martin's conjecture. All relationships between these open problems which
are not indicated by arrows are open. Note that Conjecture~\ref{conj:M_for_m} implies Martin measure is $E_0$-ergodic by following the proof of \cite{T}. Any homomorphism $f$ from $\equiv_T$ to $E_0$ is also a $(\equiv_T,\equiv_m)$-invariant function. So if it is not constant on a cone, then by Conjecture~\ref{conj:M_for_m} $f(x) \geq_m x$ on a cone. Then on a pointed perfect set $P$, there is a single many-one reduction $\rho$ so $f(x) \geq_m x$ via $\rho$ on $P$ so $f$ is injective on $P$. But if there is an injective homomorphism from a countable Borel equivalence relation $E$ to $E_0$, then $E$ is hyperfinite, and $\equiv_T$ is not hyperfinite on any pointed perfect set. 

\begin{center}
\begin{tikzpicture}[scale=.9]
\tikzstyle{every node}=[font=\footnotesize]
  \node[text width=2.5in,align=center] at (0,2.5) {Strong Steel's conjecture
  for $(\equiv_T,\equiv_m)$,
  Conjecture~\ref{conj:strong_steel_for_m}};

  \node[text width=2in,align=center] at (0,1) {Steel's conjecture
  \cite[Conjecture III]{SS}};

  \node[text width=2in,align=center] at (0,-.5) {Martin's conjecture \cite[Conjecture I, II]{SS}};

  \node[text width=2in,align=center] at (0,-2) {Martin measure is
  $E_0$-ergodic \cite{T}};

  \node[text width=2in,align=center] at (0,-3.5) {$\equiv_T$ is not Borel
  bounded \cite{BJ}};

  \node[text width=2.4in,align=center] at (5,1) {Steel's conjecture
  for \mbox{$(\equiv_T,\equiv_m)$},
  Conjecture~\ref{conj:steel_for_m}};

  \node[text width=2.1in,align=center] at (5,-.5) {Martin's conjecture
  for $(\equiv_T,\equiv_m)$,
  Conjecture~\ref{conj:M_for_m}};

  \node[text width=1.8in,align=center] at (-5,-.5) {$\equiv_T$ is
  non-approximable, Conjecture~\ref{conj:non_approx}};

  \node[text width=1.5in,align=center] at (-5,1) {$\equiv_T$ is not
  hyper-recursively-finite};

  \node[text width=1.5in,align=center] at (-5,2.5) {Every universal countable
  Borel equivalence relation is uniformly universal \cite{M}};

  \draw [->] (0,2) -- (0,1.5);
  \draw [->] (0,.5) -- (0,0);
  \draw [->] (-5,2) -- (-5,1.5);
  \draw [->] (0,-1) -- (0,-1.5);
  \draw [->] (0,-2.5) -- (0,-3);
  \draw [->] (5,.5) -- (5,0);
  \draw [->] (1.5,2) -- (3.5,1.5);
  \draw [->] (-1.5,2) -- (-3.5,1.5);
  \draw [->] (-5,0) -- (-5,.5);
  \draw [->] (-3.5,-1) -- (-1.5,-1.5);
  \draw [->] (3.5,-1) -- (1.5,-1.5);

\end{tikzpicture}
\end{center}

\end{document}